\documentclass[12pt]{article}
\usepackage{amsmath,amssymb,amsthm}
\usepackage{epsfig,graphicx}
\usepackage{enumerate}
\usepackage{url}
\usepackage{color}
\usepackage{srcltx}
\usepackage[mathscr]{eucal}
\usepackage[math]{easyeqn}
\usepackage{etoolbox}
\usepackage{hyperref}
\usepackage{subfig}
\usepackage{array,tabularx,tabulary,booktabs}
\usepackage{longtable}
\usepackage{multirow}
\usepackage{rotating}
\usepackage{floatrow}

\usepackage{nameref}
\usepackage{mathrsfs}
\usepackage{algorithm}
\usepackage[noend]{algpseudocode}
\usepackage{wasysym}
\usepackage{bbm}
\usepackage{enumitem}


\def\argmin{\operatornamewithlimits{argmin}}

\def\cond{\, | \,}

\renewcommand\leq\leqslant
\renewcommand\geq\geqslant

\newcommand\eps\varepsilon

\newcommand{\E}{\mathbb E}
\newcommand{\1}{\mathbbm 1}
\newcommand{\p}{\mathbb P}
\newcommand{\q}{\mathbb Q}
\newcommand{\R}{\mathbb R}

\newcommand{\A}{\mathcal A}
\newcommand{\B}{\mathcal B}
\newcommand{\C}{\mathcal C}

\newcommand{\F}{\mathcal F}

\newcommand{\J}{\mathcal J}
\newcommand{\K}{\mathcal K}

\newcommand{\M}{\mathcal M}
\newcommand{\N}{\mathcal N}

\newcommand{\Rad}{\mathcal R}
\newcommand{\T}{\mathcal T}

\newcommand{\bd}{{\boldsymbol D}}
\newcommand{\bi}{{\boldsymbol I}}
\newcommand{\bl}{{\boldsymbol L}}

\newcommand{\bu}{{\boldsymbol U}}
\newcommand{\bw}{{\boldsymbol W}}

\newcommand{\by}{{\boldsymbol Y}}

\newcommand{\bpi}{{\boldsymbol\Pi}}
\newcommand{\bsigma}{{\boldsymbol\Sigma}}

\newcommand\ind[1]{^{(#1)}}

\newcommand{\mclass}{\mathscr M}

\newcommand\dd{\text{\rm d}}
\renewcommand\dim{\text{dim}}
\newcommand\proj[2]{\pi_{#1}\left(#2\right)}
\newcommand\reach[1]{\text{reach}\left(#1\right)}

\newtheorem{Lem}{Lemma}
\newtheorem{Th}{Theorem}

\newtheorem{Ex}{Example}


\renewcommand{\(}{$\,}
\renewcommand{\)}{\,$}

\renewcommand{\Gamma}{\varGamma}
\renewcommand{\Pi}{\varPi}
\renewcommand{\Sigma}{\varSigma}
\renewcommand{\Delta}{\varDelta}
\renewcommand{\Lambda}{\varLambda}
\renewcommand{\Psi}{\varPsi}
\renewcommand{\Phi}{\varPhi}
\renewcommand{\Theta}{\varTheta}
\renewcommand{\Omega}{\varOmega}
\renewcommand{\Xi}{\varXi}
\renewcommand{\Upsilon}{\varUpsilon}

\def\infl{\inf\limits}

\def\argmin{\operatornamewithlimits{argmin}}

\usepackage{color}

\definecolor{blue(pigment)}{rgb}{0.2, 0.2, 0.6}
\definecolor{ultramarine}{rgb}{0.07, 0.04, 0.56}
\definecolor{darkspringgreen}{rgb}{0.09, 0.45, 0.27}
\definecolor{hookersgreen}{rgb}{0.0, 0.44, 0.0}
\definecolor{plum(traditional)}{rgb}{0.56, 0.27, 0.52}
\definecolor{purple(html/css)}{rgb}{0.5, 0.0, 0.5}
\definecolor{magenta(dye)}{rgb}{0.79, 0.08, 0.48}

\addtolength{\oddsidemargin}{-.5in}
\addtolength{\evensidemargin}{-.5in}
\addtolength{\textwidth}{1in}
\addtolength{\textheight}{1.3in}
\addtolength{\topmargin}{-.8in}

\date{}

\begin{document}
	
	\def\spacingset#1{\renewcommand{\baselinestretch}{#1}\small\normalsize} 
	\spacingset{1}
	
		\title{\bf Manifold-based time series forecasting\thanks{
				The article was prepared within the framework of the HSE University 
				Basic Research Program.
				Financial support by the German Research Foundation (DFG) through 
				the Collaborative Research Center 1294 is gratefully acknowledged.
				The results of Section \ref{sec_theoretical} have been obtained under 
				support of the RSF grant No. 19-71-30020.}
		}
		\author{Nikita Puchkin\\
			HSE University and Institute for Information Transmission Problems RAS,\\
			Aleksandr Timofeev \\
			\' Ecole Polytechnique F\' ed\' erale de Lausanne,\\
			and\\
			Vladimir Spokoiny \\
			Weierstrass Institute and Humboldt University,\\
			HSE University and Institute for Information Transmission Problems RAS}
		\maketitle
	
	\bigskip
	\begin{abstract}
		Prediction for high dimensional time series is a challenging task due to the 
		curse of dimensionality problem. Classical parametric models like ARIMA or VAR 
		require strong modeling assumptions and time stationarity and are often 
		overparametrized. This paper offers a new flexible approach using recent ideas 
		of manifold learning. The considered model includes linear models such as the 
		central subspace model and ARIMA as particular cases. The proposed 
		procedure combines manifold denoising techniques with a simple 
		nonparametric prediction by local averaging. The resulting procedure 
		demonstrates a very reasonable performance for real-life econometric time 
		series. We also provide a theoretical justification of the manifold estimation 
		procedure. 
	\end{abstract}
	
	\noindent
	{\it Keywords:}  time series prediction, manifold learning, manifold denoising, 
	ergodic Markov chain, non-mixing Markov chain
	\vfill
	
	\newpage
	\spacingset{1.45} 
	
	\section{Introduction}

We consider the problem of time series forecasting, which finds plenty of applications in 
different areas such as economics, geology, physics, system 
fault, and planning tasks.
The inability to make a consistent prediction may have a strong influence on 
developing companies and even countries, while a correct forecast can partially 
neutralize the crucial consequences.
For example, it was possible to know in advance about a stock market crash 
\cite{ghazali2007higher}, to forecast a natural disaster \cite{moustra2011artificial}, or 
compute electricity costs for a certain period of time \cite{azadeh2007forecasting}.

Classical forecasting methods 
such as ARMA \cite{10.2307/2332724}, ARIMA (see e.g. \cite{box2015time}), ARFIMA 
\cite{granger1980introduction}, GARCH \cite{bollerslev1986generalized} and LSTM 
\cite{lstm} among many others are based on parametric modeling.
The main drawbacks and limitations of such modeling is that it requires very restrictive parametric assumptions 
and time homogeneity over the whole observation period. 
Modern time series prediction techniques use more flexible nonparametric or semiparametric models
and recent advances in machine learning.
We mention Bayesian methods \cite{de2007bayesian}, Lasso \cite{asw20}, kernel-based prediction \cite{ggh20}, 
deep learning \cite{lz20}, and reinforcement learning \cite{pbtkp20} among many others.
However, for multivariate time series, all the mention approaches suffer from 
\emph{curse of dimensionality} problem:
the used models become quickly overparametrized as the dimension grows.
As a remedy, one or another dimensionality reduction technique assuming that 
the observed high-dimensional time series have 
nevertheless a low-dimensional structure.
Once the low-dimensional structure is recovered, one can make a more accurate 
forecast.
For this purpose, different manifold learning methods and dimension reduction 
techniques can be used.
For instance, usual PCA is useful for linear latent factor models \cite{lyb11}.
For nonlinear latent factor models one can use local PCA \cite{sw02} and kernel PCA 
\cite{ormn17}.
Local linear models were also considered in \cite{llyal06, tmzc15}.
In \cite{psy10}, the authors studied a central subspace model, which is similar to the 
problem of an effective dimension reduction subspace estimation (see e.g. \cite{hjs01b}) 
in i.i.d. setup.
In \cite{rcj18}, the authors used Diffusion maps \cite{c05} to reduce dimensionality.
One can also use another classical dimension reduction methods such as Isomap 
\cite{tenenbaum2000global}, LLE  \cite{roweis2000nonlinear}, LTSA 
\cite{zhang2007linear}, Laplacian Eigenmaps \cite{belkin2002laplacian}, Hessian 
Eigenmaps \cite{donoho2003hessian}, and T-SNE \cite{van2009dimensionality}.

Unfortunately, the mentioned dimension reduction methods assume that the 
observations lie precisely on a smooth manifold and often show poor performance 
when deal with noisy inputs.
One way to overcome this issue is to use functional PCA (FPCA) 
\cite{viviani2005functional, shang2014survey} which directly works with curves instead 
of vectors.
Another approach is to project the data onto a manifold using manifold denoising 
methods \cite{hein2007manifold, wcp10, ldmm, ps19}.
In our work, we assume that the data from a sliding window lie in 
a vicinity of a low-dimensional manifold.
We use recently proposed manifold denoising methods \cite{ldmm} and 
\cite{ps19} to project the data onto the manifold to capture the data 
structure.
After that, we use a weighted k-nearest neighbours predictor which is a very simple but 
yet efficient method for time series forecasting \cite{mfp17, zls17}.
To make the k-NN method exploit the low-dimensional structure, we use the weights 
based on pairwise distances between the projected data points.

For theoretical analysis, we introduce a latent variable model with manifold structure.
There is a vast of literature concerning identification of linear dynamical systems (see, 
for instance, \cite{mdss17, cw18, ftm18, smtjr18, zc18}) but the non-linear case we 
consider is not studied so well.
Our main contribution is that we obtain non-asymptotic upper bounds on accuracy of 
manifold reconstruction in two scenarios.
The first scenario is the case of mixing time series.
There are a lot of papers (e.g. \cite{sc09, mr09, km14, mdss17}) which consider mixing 
time series.
In particular, in \cite{cw18, ftm18}, the authors study stable linear dynamical systems.
The second scenario we consider is the case of non-mixing time series.
In this situation, the analysis is more technically involving.
In \cite{smtjr18}, the authors introduce martingale small ball condition and prove upper 
bounds on the least squares estimator which are also valid for unit root autoregressive 
model.
In \cite{zc18}, the authors extend the results of \cite{smtjr18} to the case of an 
autoregressive model with linear constraints.
In our work, we adapt the martingale small ball condition for non-linear setup.

The rest of this paper is organised as follows.
In Section \ref{sec_model}, we introduce a latent variable model with manifold structure.
In Section \ref{sec_methodology}, we describe our methodology for time series 
forecasting.
Then we present the performance of our method in time series forecasting in Section 
\ref{seс_numerical}.
Finally, in Section \ref{sec_theoretical}, we provide theoretical upper bounds on the 
accuracy of manifold estimation in our model.
Proofs of the main theoretical results can be found in \ref{sec_proofs}, auxiliary 
results are moved to Appendix.

\subsection*{Notations}

Throughout the paper, boldfaced letters are reserved for matrices.
Vectors and scalars are written in regular font.
For any matrix ${\boldsymbol A}$, $\|{\boldsymbol A}\|$ stands for its operator norm, 
and $\|{\boldsymbol A}\|_F$ is the Frobenius norm of ${\boldsymbol A}$.
The notation  $f(n) \lesssim g(n)$ means that there exists an absolute constant $c>0$, 
such that $f(n) \leq c g(n)$ for all $n$.
The relation $f(n) \asymp g(n)$ is equivalent to $f(n) \lesssim g(n)$ and $f(n) \gtrsim 
g(n)$.
Next, for any set $A$ and any $x\in\R^D$, $d(x, A) = \infl_{y\in A} \|x - y\|$ denotes the 
Euclidean distance from $x$ to $A$.
	\section{Statistical model}
\label{sec_model}

We assume that we observe a multivariate time series $Y_1, \dots, Y_T \in \R^D$, which 
follows the model
\begin{equation}
	\label{model}
	Y_t = X_t + \eps_t, \quad 1 \leq t \leq T,
\end{equation}
where $X_t$ is a Markov chain on a hidden $d$-dimensional manifold $\M^*$, $d < D$, 
and $\eps_1, \dots, \eps_t$ are independent zero-mean innovations.
We give some examples where a model with a hidden low-dimensional structure appears.

\begin{Ex}[central subspace model, \cite{psy10}]
	Let $g : \R^d \rightarrow \R^p$ be a smooth function and let ${\boldsymbol\Phi}$ be 
	a $(p\times 
	d)$ matrix. The central subspace model is given by the formula
	\[
		Z_t = g({\boldsymbol\Phi}^T Z_{t-1}) + \xi_t, \quad 1 \leq t \leq T.
	\]
	Take $X_t = (Z_{t-1}, g({\boldsymbol\Phi}^T Z_{t-1})) \in \R^{2p}$, $Y_t = (Z_{t-1}, 
	Z_t)$, $\eps_t = (0, \xi_t)$. Then $Y_t = X_t + \eps_t$ and $X_t$ lies on the graph of 
	$g\circ{\boldsymbol\Phi}^T$ which is a $d$-dimensional submanifold in $\R^{D}$ 
	with $D = 2p$.
\end{Ex}

\begin{Ex}
	A natural extension of the central subspace model is
	\[
		Z_t = g(\proj{\M}{Z_{t-1}}) + \xi_t, \quad 1 \leq t \leq T.
	\]
	As before, $g : \R^d \rightarrow \R^D$ is a a smooth function and $\M$ is an 
	unknown smooth $d$-dimensional submanifold in $\R^p$.
	Assume $\M$ is such that there is a global diffeomorphism $\varphi : \R^p 
	\rightarrow \R^d$, which isometrically maps $\M$ into $\R^d$.
	Then for any $z \in \R^p$ there exists $u \in \R^d$ such that $\proj\M z = 
	\varphi^{-1}(u)$.
	Consider $X_t = (Z_{t-1}, g(\proj{\M}{Z_{t-1}})) \in \R^{2p}$, $Y_t 
	= (Z_{t-1}, Z_t)$, $\eps_t = (0, \xi_t)$.
	Then $X_t$ lies on the graph of $g \circ \varphi^{-1}$, which is a $d$-dimensional 
	submanifold in $\R^{D}$ with $D = 2p$.
\end{Ex}

\begin{Ex}[univariate autoregressive model]
	A standard univariate autoregressive model of order $\tau$ is given by
	\[
		Z_t = \sum\limits_{i = 1}^\tau a_i Z_{t-i} + \xi_t, \quad 1 \leq t \leq T.
	\]
	Fix $D > \tau$ and apply a sliding window technique: $Y_t = (Z_t, \dots, Z_{t-D+1}) 
	\in \R^D$.
	Then the autoregressive model can be rewritten as $Y_{t} = {\boldsymbol A} Y_{t-1} 
	+ \eps_t$, where $\eps_t = (\xi_t, 0, \dots, 0) \in \R^D$ and
	\[
		{\boldsymbol A} =
		\begin{pmatrix}
			a_1 & a_2 & \dots & a_k & 0 & \dots & 0 & 0\\
			1 & 0 & \dots & 0 & 0 & \dots & 0 & 0\\
			0 & 1 & \dots & 0 & 0 & \dots & 0 & 0\\
			\vdots & \vdots & \ddots & \vdots & \vdots & \ddots & \vdots & \vdots\\
			0 & 0 & \dots & 1 & 0 & \dots & 0 & 0\\
			0 &  0 & \dots & 0 & 1 & \dots & 0 & 0\\
			\vdots & \vdots & \ddots & \vdots & \vdots & \ddots & \vdots & \vdots\\
			0 &  0 & \dots & 0 & 0 & \dots & 1  & 0\\
		\end{pmatrix}
		\in \R^{D\times D}.
	\]
	In this case, $X_t = {\boldsymbol A} Y_{t-1}$ lies on $Im({\boldsymbol A})$. 
	Since $\text{rank}({\boldsymbol A}) < D$, $Im(A)$ is a linear subspace in $\R^D$ of 
	dimension $\text{rank}({\boldsymbol A})$.
\end{Ex}

We have to impose some regularity conditions on the underlying manifold $\M^*$.
One of the bottlenecks in nonlinear manifold estimation is high curvature of the manifold 
(see, for example, \cite{bml06}).
To overcome this issue, we have to require that $\M^*$ is smooth enough.
We assume that
\begin{align}
	\label{a1}
	\M^{*}\in \mclass_\varkappa^{d}
	= \big\{
	\notag
	\M \subset \R^{D} : \M\text{ is a compact, connected manifold}
	\\\tag{A1}
	\text{without a boundary, } \M\in\C^2, \M \subseteq \B(0, R),
	\\
	\notag
	\text{Vol}(\M) \leq V,
	\reach\M \geq \varkappa, \text{dim}(\M) = d < D \big\}.
\end{align}
The reach of a manifold $\M$ is defined as a supremum of such $r$ that any point $y \in 
\R^D$, such that $d(y, \M) \leq r$, has a unique Euclidean projection onto $\M$.
The assumption \eqref{a1} is ubiquitous in manifold learning literature (see e.g. 
\cite{nm10, gppvw12a, gppvw12b, fmn16, al19}).

We also have to require some properties of the underlying Markov chain $\{X_t : 1 
\leq t \leq T\}$.
It is a common assumption in manifold learning literature (e.g. \cite{gppvw12a, 
gppvw12b, al19, ps19}) that, for any $t$, the marginal density of $X_t$ is bounded away 
from zero.
In our setup, this would imply exponential ergodicity of the Markov chain $\{X_t\}$.
Let $\p_t$ be the marginal distribution of $X_t$ and assume that the Markov chain 
$\{X_t\}$ has a stationary distribution $\pi$.
We require the following: there exist $A > 0$ and $\rho \in (0, 1]$ such that, for any $t \in 
\{1, \dots, T\}$, the measure $\p_t$ satisfies
\begin{equation}
	\label{a2}
	\tag{A2}
	\|\p_{t} - \pi\|_{TV} \leq A^2 (1 - \rho)^t,
\end{equation}
where $\|\cdot\|_{TV}$ is the total variation distance.

Unfortunately, it is not always the case that a latent Markov chain is exponentially mixing.
In our work, besides the case of ergodic Markov chain $\{X_t : 1 
\leq t \leq T\}$, we also consider the situation of non-mixing Markov chain.
The next assumption is a relaxation of \eqref{a2} and admits the absence of mixability.
Let $\F_t$ be a sigma-algebra generated by $X_1, \dots, X_t$ for $t \in \{1, \dots, T\}$ 
and put $\F_0$ the trivial sigma-algebra. 
We require the following: there exist $k \in \mathbb N, h_0 > 0$, and 
$p_1 \geq p_0 > 0$ such that, for any $t \in \{0, \dots, T-k\}$, $h \in (0, h_0)$, and $x \in 
\M^*$, it holds
\begin{equation}
	\label{a3}
	\tag{A3}
	p_0 h^d \leq \frac1k \sum\limits_{j=1}^k \p\left( X_{t + j} \in \B(x, h) \cond \F_t \right) 
	\leq p_1 h^d.
\end{equation}
Assumption \eqref{a3} is similar to the martingale small ball condition introduced in 
\cite{smtjr18} and it is far less restrictive than \eqref{a2}.
In particular, it admits some periodic Markov chains.

Finally, we describe assumptions about $\eps_1, \dots, \eps_T$.
A random vector $\xi \in \R^D$ is called sub-Gaussian with parameter $\sigma^2$ if
\[
	\sup\limits_{\|u\| = 1} \E e^{\lambda u^T(\xi - \E\xi)} \leq e^{\lambda^2 \sigma^2/2}, 
	\quad \forall \lambda \in \R.
\]
We assume that $\eps_1, \dots, \eps_T$ are independent zero-mean sub-Gaussian 
errors:
\begin{equation}
	\label{a4}
	\tag{A4}
	\E \eps_t = 0, \quad \eps_t \in \text{SG}(\sigma_t^2), \quad \forall \, t \in \{1, \dots, T\}.
\end{equation}

In our paper, we study an empirical risk minimizer (ERM)
\begin{equation}
	\label{hatm}
	\widehat\M \in \argmin\limits_{\M \in \mclass_\varkappa^d} \frac1T 
	\sum\limits_{t=1}^T d^2(Y_t, \M).
\end{equation}
In other words, the manifold $\widehat\M$ is the best fitting manifold based on the 
observed data.
We focus on the case when the manifold dimension $d$ is known.
Otherwise, one can add a regularisation term enforcing small dimension of the 
estimated manifold:
\begin{equation}
	\label{hatmpen}
	\widehat\M \in \argmin\limits_{\M \in \cup_{d=1}^{D-1} \mclass_\varkappa^d} \frac1T 
	\sum\limits_{t=1}^T d^2(Y_t, \M) + \lambda \dim(\M).
\end{equation}
The ERM $\widehat\M$ from \eqref{hatm} is a nice object for theoretical study but in 
practice one cannot perform a minimisation over a set of manifolds.
Instead, one can try to approximate the target functional and mimic the ERM manifold.
In Section \ref{sec_methodology}, we discuss two approximation techniques which lead 
to computationally efficient manifold denoising algorithms.
	\section{Methodology}
\label{sec_methodology}

Assume, we observe a multivariate time series $Z_1, \dots, Z_T \in \R^{p}$ and our goal is 
to make one step ahead forecast $\widehat Z_{T+1}$.
On the first step, we use a sliding window technique for data preprocessing.
We fix an integer $b$, construct a collection of patches $\{ Y_t = (Z_{t-1}, Z_{t-2}, \dots, 
Z_{t-b}) \in \R^{pb} : b+1 \leq t \leq T \}$ and consider a set of pairs $S_T = \{(Y_t, Z_t) : 
b+1 \leq t \leq T\}$.
We assume that high-dimensional vectors $Y_{b+1}, \dots, Y_{T+1} \in \R^D$, $D = bp$, 
lie around a low-dimensional manifold.
We exploit recent advances in manifold learning, described further in this section, to 
project the patches $Y_t$'s onto a manifold in the patch space.
These methods, in fact, mimic the estimate \eqref{hatm} or \eqref{hatmpen} and then 
return the projections $\widehat X_{b+1}, \dots, \widehat X_{T+1}$ of $Y_{b+1}, \dots, 
Y_{T+1}$ onto the manifold $\widehat\M$, respectively.
After that, our forecast is determined by the weighted $k$-nearest neighbors rule:
\[
	\widehat Z_{T+1} = Z_T + \frac{\sum\limits_{t=b+1}^T w_t (Z_t - 
	Z_{t-1})}{\sum\limits_{t=b+1}^T w_t},
\]
where the weights are defined by the formula
\begin{equation}
	\label{knn_weights}
	w_t = e^{-(T+1 - t)/\tau}\K\left( \frac{\|\widehat X_{T+1} - \widehat 
	X_t\|}{h_k}\right),
\end{equation}
where $h_k$ is the $k$-th smallest value amongst $\|\widehat X_{T+1} - \widehat 
X_{b+1}\|, \dots \|\widehat X_{T+1} - \widehat X_T\|$, $\tau > 0$ is a discounting 
parameter and $\K(\cdot)$ is a localising kernel.
In our work, we use Epanechnikov kernel $\K(u) = 3/4 (1 - u^2)_+$ but one can choose 
other kernels.

If one eagers to make several step ahead prediction, i.e. to construct an estimate of 
$Z_{T+m}$, $m > 1$, then he can use a widespread technique.
Sequentially make one step ahead forecasts $\widehat Z_{T+1}, \dots, \widehat 
Z_{T+m}$ adding the new forecast to the data after each step.

\subsection{Manifold learning techniques}

In this section, we briefly describe how to approximate the target functional \eqref{hatm} 
or \eqref{hatmpen} in order to mimic the (penalised) ERM.
In practice, a manifold $\M$ is often associated with a point cloud $\{U_1, \dots, U_T\}$ 
on it.
Given observations $Y_1, \dots, Y_T$, one can use a nonparametric smoothing technique 
to approximate the squared distances $d^2(Y_t, \M)$ using the point cloud:
\[
	d^2(Y_t, \M) \approx \sum\limits_{j=1}^T w_{tj} \|Y_t - U_j\|^2,
\]
where $w_{tj}$, $1 \leq j \leq T$, are some localising weights.
Then
\begin{equation}
	\label{target_approx}
	\sum\limits_{t=1}^T d^2(Y_t, \M) \approx \sum\limits_{t, j=1}^T w_{tj} \|Y_t - U_j\|^2.
\end{equation}

The choice of $w_{tj}$'s plays an important role in performance of an algorithm.
In \cite{ps19}, the authors introduce the algorithm called SAME (from structure-adaptive 
manifold estimation).
For each $t$, they associate $U_t$ with a projection of $Y_t$ onto $\M$ and introduce a 
projector $\bpi_t$ onto the tangent space at the point $U_t$.
Then they take the localising weights of the form
\begin{equation}
	\label{same_weights}
	w_{tj} = \frac1{T h^d} \K_0 \left( \frac{\|\bpi_t(Y_t - Y_j)\|^2}{h^2} \right) \1\left( \|Y_t - 
	Y_j\| < \tau_0 \right),
\end{equation}
where $\K_0 : \R \rightarrow \R_+$ is a smooth kernel, $\int \K_0(t) \dd t = 1$.
Given $\bpi_1, \dots, \bpi_T$, the values of $U_1, \dots, U_T$, minimising the 
approximated target functional \eqref{target_approx}, are equal to
\[
	\widehat U_t = \frac{\sum\limits_{j=1}^T w_{tj} Y_j}{\sum\limits_{j=1}^T w_{tj}},
\]
where the weights $w_{tj}$ are computed according to \eqref{same_weights}.
After that, the obtained values $\widehat U_1, \dots, \widehat U_T$ are used to update 
the projectors $\bpi_1, \dots, \bpi_T$ and then the procedure repeats.
After several iterations, the projection estimates $\widehat X_1, \dots, \widehat X_T$, 
used in \eqref{knn_weights} are put to $U_1, \dots, U_T$.
The pseudocode of SAME is given in Algorithm \ref{same_alg}.

\begin{algorithm}[H]
	\caption{SAME, \cite{ps19}}
	\label{same_alg}
	\begin{algorithmic}[1]
		\State The initial guesses \( \widehat{\bpi}_{1}\ind{0}, \dots, 
		\widehat{\bpi}_{T}\ind{0} \) of projectors onto tangent spaces, dimension of the 
		manifold $d$, the number of iterations \( K + 1 \), an initial bandwidth \( h_0 \), 
		the threshold \( \tau_0 \) and constants \(a > 1\) and \(\gamma > 0\) are given.
		\For{ \( k \) from \( 0 \) to \( K \)}
		\State Compute the weights \( w_{tj}\ind{k} \) according to the formula
		\[
		w_{tj}\ind{k} = \frac1{Th^d} \K_0 \left( \| \widehat{\bpi}_{t}\ind{k} (Y_{t} 
			- Y_{j}) \|^2 / h_k^2 \right) \1 \left( \|Y_{t} - Y_{j} \| \leq \tau_0 \right), \quad 1 
		\leq t, j \leq T.
		\]
		\State Compute the estimates
		\[
		\widehat{U}_{t}\ind{k} = \left(\sum\limits_{j=1}^n w_{tj}\ind{k} 
			Y_{j}\right) \Big/ \left(\sum\limits_{j=1}^n w_{tj}\ind{k}\right), \quad 1 \leq t 
			\leq T.
		\]
		\State If \( k < K \), for each \( T \) from $1$ to $T$, define a set \( \J_{t}\ind{k} = 
		\{ j :  \|\widehat{U}_{j}\ind{k} - \widehat{U}_{t}\ind{k} \| \leq \gamma h_k \} \) and 
		compute the matrices
		\[
		\widehat{\bsigma}_{t} \ind{k} = \sum\limits_{j\in\J_{t}\ind{k}} 
		(\widehat{U}_{j}\ind{k} - \widehat{U}_{t}\ind{k})(\widehat{U}_{j}\ind{k} - 
		\widehat{U}_{i}\ind{k})^T, \quad 1 \leq t \leq T.
		\]
		\State If \( k < K \), for each \( i \) from 1 to n, define \( \widehat{\bpi}_{i}\ind{k+1} 
		\) as a projector onto a linear span of eigenvectors of \( \widehat{\bsigma}_{i} 
		\ind{k} \), corresponding to the largest \( d \) eigenvalues.
		\State If \( k < K \), set \( h_{k+1} = a^{-1} h_k \).
		\EndFor
		\Return the estimates \( \widehat{X}_1 = \widehat{U}_1\ind K, \dots, 
		\widehat{X}_T = \widehat{U}_T\ind K \).
	\end{algorithmic}
\end{algorithm}

The algorithm SAME uses the manifold's dimension $d$ as an input parameter.
If $d$ is not known in advance, one can use \eqref{hatmpen} instead of \eqref{hatm}.
In this case, the squared distances are also approximated according to 
\eqref{target_approx} but the weights $w_{tj}$ are computed as follows:
\[
	w_{tj} = \K_0\left( \frac{\|U_t - U_j\|}h \right).
\]
The main challenge is to deal with the discrete second term.
Fortunately, Lemma 3.1 in \cite{ldmm} helps to overcome this issue.
Let $\mathcal V$ be an open subset in $\mathcal T_{U_t}\M$ such that $0 \in \mathcal 
V$ and let $\mathcal E_t : 
\mathcal V \rightarrow \R^D$ be an exponential map of $\M$ at $U_t$.
Then 
\[
	\dim(\M) = \| \nabla \mathcal E_t(0) \|_F^2, \quad 1 \leq t \leq T.
\]
Here and further, $\|\cdot\|_F$ stands for the Frobenius norm.
Similarly the squared distances, the squared norm of the gradient of the exponential 
map can be approximated according to the formula
\[
	\| \nabla \mathcal E_t(0) \|_F^2 \approx \sum\limits_{j=1}^T 
	w_{tj} \frac{\|U_t - U_j\|^2}{h^2}.
\]
Thus, the dimension of $\M$ can be approximated by
\[
	\dim(\M) \approx \frac{1}{T} \sum\limits_{t, j=1}^T w_{tj} \frac{\|U_t - U_j\|^2}{h^2},
\]
and, for the target functional \eqref{hatmpen}, we have
\begin{equation}
	\label{target_pen_approx}
	\frac1T \sum\limits_{t=1}^T d^2(Y_t, \M) \approx \frac{1}{T} \sum\limits_{t, 
	j=1}^T w_{tj} \|Y_t - U_j\|^2 + \frac{\lambda}{Th^2} \sum\limits_{t, j=1}^T w_{tj} 
	\|U_t - U_j\|^2.
\end{equation}
The algorithm LDMM in \cite{ldmm} uses the split Bregman iteration 
\cite{go09} to find a local minimum of \eqref{target_pen_approx}.
The pseudocode of LDMM is given in Algorithm \ref{ldmm_alg} below.

\begin{algorithm}[H]
	\caption{LDMM, \cite{ldmm}}
	\label{ldmm_alg}
	\begin{algorithmic}[1]
		\State A matrix $\by = (Y_1, \dots, Y_T)^T \in \R^{T\times D}$ of noisy 
		observations and positive numbers $h, \lambda, \mu$ are given.
		\State Initial guess: $\bu\ind 0 = \by \in \R^{T\times D}$, $r\ind 0 = 0 \in 
		\R^{T\times D}$.
		\While{not converge}
		\State Compute the weight matrix $\bw\ind k = \left( w_{tj}\ind k : 1 \leq t, j \leq T 
		\right)$, where
		\[
			w_{tj}\ind k = e^{-\|U_t\ind k - U_j\ind k\|^2/h^2}.
		\]
		\State Compute matrices $\bd\ind k = \text{diag}(d_1\ind k, \dots, d_T\ind k)$ 
		and $\bl\ind k = 
		\bd\ind k - \bw\ind k$, where
		\[
			d_t\ind k = \sum\limits_{j=1}^n w_{tj}\ind k.
		\]
		\State Solve the following linear matrix equation with respect to $V \in 
		\R^{T\times D}$:
		\[
			(\bl\ind k + \mu \bw\ind k) V = \mu \bw\ind k(\bu^t - r^t),
		\]
		\State Update $\bu\ind k$ by solving the least-squares problem
		\[
			\bu\ind{k+1} \in \argmin\limits_{V'} \| \by - V' \|^2_F + 
			\frac{\lambda}{\mu  h^2} \| V - V' + r\ind k \|_F^2,
		\]
		which is given by the formula
		\[
			\bu\ind{k+1} = \left(\by + \frac{\lambda}{\mu h^2} ( V + r\ind k) \right) \Big/ 
			\left(1 + \frac{\lambda}{\mu h^2} \right).
		\]
		\State Update $r$:
		\[
			r\ind{k+1} = r\ind k + V - \bu\ind{k+1}.
		\]
		\State Put $k \gets k + 1$.
		\EndWhile
		\State\Return $\widehat X_1 = U_1\ind k, \dots, \widehat X_T = U\ind k$.
	\end{algorithmic}
\end{algorithm}

	\section{Numerical Experiments}
\label{seс_numerical}

In this section, we illustrate performance of the algorithms described in Section 
\ref{sec_methodology}.
The algorithms LDMM and SAME are applied sequentially with the weighted nearest 
neighbours method for econometric multivariate time series forecasting.
The algorithms are compared to the weighted nearest neighbours method without the 
manifold reconstruction step and ARIMA.
We release the code with experiments on 
\href{https://github.com/TimofeevAlex/Manifold-based-time-series-forecasting}{GitHub}.
We use the data provided by the Russian Presidential Academy of National Economy 
and Public Administration. 
It represents various quantities characterizing the living standard of the Russian
people from January 1999 to October 2018.
The data contains four components, among which there is a strong correlation between 
the first two and the last two ones, and moreover, it has seasonability. 
As the series becomes multivariate, the sliding window becomes multivariate too, and 
therefore, the data points fed into the LDMM and SAME input consist of four windows 
corresponding to univariate time series.
Thus, the reconstruction of the manifold proceeds at once for all components of the 
series, which certainly provides additional information that allows improving the quality 
of prediction.

The hyperparameters for all algorithms are tuned only for the one step ahead prediction 
simultaneously for all components and then used in all other simulations.
For LDMM, we take the heat kernel $\K_0(t) = e^{-t^2/4}$ with bandwidth 
$h^2 = 0.001$, the hyperparameters $\lambda$ and $\mu$ are 
set to $h^2/7$ and $1500$, respectively, and the number of iterations is $7$.
The width of the sliding window is $11$ and the number of the nearest neighbours in 
ascending order of the component number is $30, 10, 7, 7$.
The algorithm SAME has a different set of hyperparameters.
We use the width of the sliding window equal to $11$, $\tau_0 = 1.0$, the number of 
iterations is set to $21$ and the numbers of nearest neighbours are $9, 21, 21, 15$ for 
the first, the second, the third, and the fourth components, respectively.
The time discount factor $\tau$ (see \eqref{knn_weights}), involved in all weighted k-NN 
based algorithms, is set to $20$. 
For the ARIMA algorithm, the hyperparameters are $(p, d, q) = (6,1,0)$.

The results of prediction are collected in Table \ref{table:multivar_ts}.
Plots of the predictions are shown in Figures 
\ref{fig_1component} -- \ref{fig_4component} in Appendix \ref{plots}.
First, from Table \ref{table:multivar_ts}, one can observe that LDMM and SAME improve 
the predictions of the weighted nearest neighbors method in all the cases.
Second, one can notice that the performance of LDMM and SAME is comparable to 
ARIMA and often is even better.
However, ARIMA does not particularly react to sudden leaps in the series components 
and minimizes the error by tending to its mean.
This behavior should be taken into account by practitioners when choosing an algorithm, 
because forecasting of such jumps may be extremely crucial in certain tasks.

\begin{table}[H]
	\resizebox{\textwidth}{!}{
		\caption{The RMSEs of predictions for multidimensional time series.}
		\label{table:multivar_ts}
		\begin{tabular}{|l|l|l|l|l|l|}
			\hline
			\multicolumn{2}{|l|}{} & \multicolumn{4}{c|}{RMSE $\times 10^3$} \\
			\hline
			\begin{tabular}[c]{@{}l@{}}
				Lookfront (months)
			\end{tabular}
			& Algorithm &  Component 1 & Component 2 & Component 3 & Component 
			4 \\
			\hline
			\multirow{4}{*}{1} & SAME & 1.2 & 1.9 & 1.5 & 1.7 \\
			\cline{2-6}
			& LDMM & 1.6 & 2.4 & 1.5 & 1.4 \\
			\cline{2-6} 
			& k-NN & 1.7 & 3.9 & 2.4 & 1.8 \\
			\cline{2-6}
			& ARIMA & 0.8 & 2.1 & 1.4 & 1.6 \\
			\hline
			\multirow{4}{*}{2} & SAME & 1.5 & 2.5 & 2.1 & 3.9 \\
			\cline{2-6}
			& LDMM & 1.4 & 2.9 & 2.0 & 4.3 \\
			\cline{2-6} 
			& k-NN & 1.8 & 4.4 & 3.0 & 4.3 \\
			\cline{2-6} 
			& ARIMA & 1.8 & 2.5 & 1.9 & 3.8 \\
			\hline
			\multirow{4}{*}{3} & SAME & 1.9 & 3.3 & 2.8 & 5.1 \\
			\cline{2-6}
			& LDMM & 2.8 & 4.0 & 2.9 & 4.9 \\
			\cline{2-6} 
			& k-NN & 3.0 & 7.1 & 4.2 & 5.8 \\
			\cline{2-6} 
			& ARIMA & 1.8 & 3.2 & 2.0 & 4.9 \\
			\hline
			\multirow{4}{*}{4} & SAME & 2.0 & 3.6 & 3.6 & 5.6 \\
			\cline{2-6}
			& LDMM & 3.0 & 3.9 & 3.5 & 5.0 \\
			\cline{2-6}
			& k-NN & 3.1 & 7.2 & 4.5 & 6.0 \\
			\cline{2-6}
			& ARIMA & 2.0 & 3.3 & 2.1 & 5.1 \\
			\hline
		\end{tabular}
	}
\end{table}
	\section{Theoretical results}
\label{sec_theoretical}

In this section, we provide theoretical guarantees on the empirical risk minimiser 
\eqref{hatm}.
We are concerned with the question how well the manifold $\widehat\M$, learned from a 
single trajectory $Y_1, \dots, Y_T$, fits other trajectories.
For this purpose, we introduce a trajectory $Y_1', \dots, Y_T'$ which has the same joint 
distribution as $Y_1, \dots, Y_T$ and is independent of $Y_1, \dots, Y_T$.
For any $\M \in \mclass_\varkappa^d$, we characterise its performance by the expected 
average squared distance over the test trajectory $Y_1', \dots, Y_T'$:
\[
	\frac1T \sum\limits_{t=1}^T \E d^2(Y_t', \M).
\]
We are interested in the generalisation ability of ERM $\widehat\M$ and in upper bounds 
on the excess risk
\[
	\frac1T \sum\limits_{t=1}^T \E d^2(Y_t', \widehat \M) - 
	\frac1T \sum\limits_{t=1}^T \E d^2(Y_t', \M^*).
\]
Our first result concerns the case of ergodic hidden Markov chains.
\begin{Th}
	\label{th1}
	Assume \eqref{a1}, \eqref{a2}, and \eqref{a4}.
	Then, for the ERM \eqref{hatm}, it holds that
	\[
		\frac1T \sum\limits_{t=1}^T \E d^2(Y_t', \widehat \M) - \inf\limits_{\M \in 
		\mclass_\varkappa^d} \frac1T \sum\limits_{t=1}^T \E d^2(Y_t', \M)
		\lesssim
		\begin{cases}
			\sqrt{\frac{D \log T}{T \log(1/(1 - \rho))}}, \quad d < 4,\\
			\frac{\sqrt{D} \log^{3/2} T}{\sqrt{T \log(1/(1 - \rho))}}, \quad d = 4,\\
			\left(\frac{\log T}{T \log(1/(1 - \rho))}\right)^{2/d}, \quad d > 4.
		\end{cases}
	\]
\end{Th}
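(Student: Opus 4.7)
The plan is to combine a standard ERM symmetrisation with a blocking device that turns the ergodic chain into an essentially i.i.d.\ sample, and then run a chaining argument over the manifold class using the entropy of $\mclass_\varkappa^d$. First, since $\widehat\M$ minimises the empirical risk and the test trajectory $Y_t'$ is independent of $\widehat\M$ with the same marginal law as $Y_t$,
\[
  \frac1T \sum_{t=1}^T \E d^2(Y_t', \widehat\M) - \inf_{\M \in \mclass_\varkappa^d} \frac1T \sum_{t=1}^T \E d^2(Y_t', \M) \leq 2\, \E \sup_{\M \in \mclass_\varkappa^d} \left| \frac1T \sum_{t=1}^T \bigl( d^2(Y_t, \M) - \E d^2(Y_t, \M) \bigr) \right|,
\]
so it is enough to control the right-hand side in expectation.

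For the decoupling I would split $\{1, \dots, T\}$ into alternating blocks of length $\ell$ and, by Yu's coupling lemma applied through \eqref{a2}, replace every second block by an independent copy at total-variation cost of order $T \ell^{-1} A^2 (1-\rho)^\ell$. Taking $\ell \asymp \log T / \log(1/(1-\rho))$ makes this cost polynomially small in $T$ and reduces the problem to an i.i.d.\ empirical process over the manifold class based on $T_{\mathrm{eff}} \asymp T \log(1/(1-\rho)) / \log T$ effective samples.

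Next, under \eqref{a1} the class $\mclass_\varkappa^d$ admits an $L_2$ covering number whose logarithm has exponent $d/2$ in $1/\eps$, a standard consequence of bounded reach and volume together with local approximation by $d$-dimensional tangent discs, and the Lipschitz bound $|d^2(y, \M) - d^2(y, \M')| \lesssim (R + \|y\|)\, d_H(\M, \M')$ transfers this entropy to the induced loss class. Combining this with Bousquet's inequality for sub-Gaussian functionals, noting that $d^2(Y_t, \M) \lesssim R^2 + \|\eps_t\|^2$ with $\E \|\eps_t\|^2 \lesssim D \sigma_t^2$ by \eqref{a4}, Dudley's entropy integral yields the three regimes directly: the integral converges for $d < 4$ and gives the parametric $\sqrt{D/T_{\mathrm{eff}}}$; at $d = 4$ it picks up an additional $\log$ factor; and for $d > 4$ the critical radius equalising entropy and variance is $T_{\mathrm{eff}}^{-2/d}$. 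Substituting the expression for $T_{\mathrm{eff}}$ reproduces the three cases of the theorem.

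The delicate point is handling the unbounded loss under dependence: the envelope is of order $R^2 + D \sigma^2$ because $\eps_t$ is only sub-Gaussian in $\R^D$, whereas the cleanest Talagrand-type inequalities prefer bounded increments. I would truncate at a level $M \asymp D + \log T$ so that the tail contributes a negligible $T^{-1}$ term and only then apply concentration block by block. Balancing the truncation level, the block length $\ell$, and the chaining radius so that each contribution stays subdominant, and in particular so that only $\sqrt{D}$ (and not $D$) appears in the final bound, is where most of the bookkeeping will live.
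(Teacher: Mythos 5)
Your proposal is correct in outline and follows the same overall strategy as the paper: reduce the excess risk to the uniform deviation $2\,\E\sup_{\M}|\frac1T\sum_t(d^2(Y_t,\M)-\E d^2(Y_t,\M))|$, decouple the dependence by blocking at scale $\log T/\log(1/(1-\rho))$, and then run Dudley's chaining against the Hausdorff covering number of $\mclass_\varkappa^d$ (whose log is of order $u^{-d/2}$, by Theorem 9 of Genovese et al.), which is exactly what produces the phase transition at $d=4$. The two places where you genuinely diverge are worth noting. First, your blocking is the classical Yu/Berbee scheme (contiguous alternating blocks, every second block replaced by an independent copy), whereas the paper uses interleaved blocks $B_k=\{k,k+K,k+2K,\dots\}$ whose elements are $K$ steps apart, and couples them directly to i.i.d.\ draws from $\pi$ via a telescoping total-variation argument (their Lemmata \ref{lem3} and \ref{lem4}); both give effective sample size $T\log(1/(1-\rho))/\log T$, but the interleaved version lets the paper apply a plain i.i.d.\ symmetrisation inside each block rather than treating a whole block as one fat observation, which is what keeps the envelope at the single-observation scale and avoids the $\sqrt D$ versus $D$ bookkeeping you flag at the end. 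Second, you propose truncation plus a Talagrand/Bousquet-type inequality to handle the unbounded loss; the paper instead keeps the random envelope $\sqrt{\sum_{t\in B_k}(\|\eps_t\|+2R)^2}$ inside the Dudley integral (Lemma A.3 of Srebro et al.) and takes expectations at the end, which is simpler for an in-expectation bound since no high-probability control is needed. One shared caveat: assumption \eqref{a2} as literally stated controls only the marginals $\|\p_t-\pi\|_{TV}$, while both your invocation of Yu's lemma and the paper's block coupling require the conditional (uniform-ergodicity) version of this bound; the paper uses it implicitly, so this is not a defect specific to your argument, but you should state it.
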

The result of Theorem \ref{th1} improves the results of \cite{nm10} and \cite{fmn16}, 
where the authors obtained the rates $\widetilde O(T^{-1/(d+4)})$ and $\widetilde 
O(T^{-2/(d+4)})$, respectively, in i.i.d. setup.

For the case of non-mixing Markov chains, we provide the following system identification 
result.
\begin{Th}
	\label{th2}
	Assume \eqref{a1}, \eqref{a3}, and \eqref{a4}.
	Assume that the normal and the tangent components of the noise are independent.
	Let $\sigma_1, \dots, \sigma_T$ be such that there exists a constant 
	$c \in (0, 1)$ such that
	\[
		\frac8T \sum\limits_{t=1}^T \sigma_t^2 + 64D\sigma_{\max}^2 \leq \frac{p_0}{8k} 
		\left(\frac{c\varkappa}4 \right)^{d+2}.
	\]
	Then, with probability at least $1 - 8/T$, we have
	\[
		\frac1T \sum\limits_{t=1}^T d^2(X_t, \widehat\M)
		\lesssim \psi_T
		+ \frac{D\left(\sigma_{\max} \sqrt{\log T} \vee (\log T / T)^{1/d} \right)}T 
		\sum\limits_{t=1}^T \sigma_t^2
		+ \frac{\left( \sigma_{\max}^4 \log^2 T \vee (\log T / T)^{4/d} 
		\right)}{\varkappa^2},
	\]
	where $\sigma_{\max} = \max\limits_{1 \leq t \leq T} \sigma_t$ and
	\[
		\psi_T =
		\begin{cases}
			\frac 1T \sqrt{\sum\limits_{t=1}^T \sigma_t^2}, \quad d < 4,\\
			\frac{\log T}T \sqrt{\sum\limits_{t=1}^T \sigma_t^2}, \quad d = 4,\\
			T^{-2/d}\sqrt{\frac{\sum\limits_{t=1}^T \sigma_t^2}T}, \quad d > 4.
		\end{cases}
	\]
\end{Th}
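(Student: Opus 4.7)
The plan is to start from the basic inequality supplied by the ERM property, decompose the resulting cross term into a sub-Gaussian empirical process indexed by $\mclass_\varkappa^d$, control it uniformly by chaining, and invoke the small ball assumption \eqref{a3} to turn the ERM inequality into an upper bound on $\frac1T \sum_t d^2(X_t, \widehat\M)$ without appealing to mixing. Throughout, curvature corrections are kept under control via the reach bound in \eqref{a1}.

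First, since $\M^* \in \mclass_\varkappa^d$, the defining property of $\widehat\M$ gives $\sum_t d^2(Y_t, \widehat\M) \leq \sum_t d^2(Y_t, \M^*)$. Writing $Y_t = X_t + \eps_t$ with $X_t \in \M^*$ and setting $\widehat x_t = \pi_{\widehat\M}(Y_t)$, I would expand both squared distances around $X_t$ and rearrange to obtain a basic inequality of the form
\[
\sum_t \|X_t - \widehat x_t\|^2 \;\leq\; 2 \sum_t \langle \eps_t,\, \widehat x_t - X_t \rangle + R_T,
\]
where $R_T$ collects (a) the discrepancy between $X_t$ and $\pi_{\M^*}(Y_t)$, which by Federer's tubular neighbourhood theorem together with $\reach{\M^*} \geq \varkappa$ is of order $\|\eps_t\|^4/\varkappa^2$, and (b) cross terms whose conditional expectations vanish by the hypothesised independence of the tangential and normal components of $\eps_t$. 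Summed and controlled by the sub-Gaussian maximal bound $\max_t \|\eps_t\| \lesssim \sigma_{\max}\sqrt{\log T}$, $R_T$ produces exactly the $\sigma_{\max}^4 \log^2 T/\varkappa^2$ and related second-order pieces in the statement.

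Next, I would control the stochastic term $\mathcal E(\M) = \sum_t \langle \eps_t, \pi_\M(Y_t) - X_t\rangle$ uniformly over $\M \in \mclass_\varkappa^d$. For each fixed $\M$, the summands form a sub-Gaussian sequence with parameter $\sigma_t^2 \|\pi_\M(Y_t) - X_t\|^2$ by \eqref{a4}, and one applies a Hoeffding-type inequality after a covering of $\mclass_\varkappa^d$ in Hausdorff distance. Compact $C^2$ manifolds of reach at least $\varkappa$, volume at most $V$ and diameter bounded by $R$ admit covering entropies polynomial in $1/\eta$, so a Dudley chaining integral combined with a peeling in the localisation radius $\sup_t d(X_t, \M)$ yields the three regimes of $\psi_T$: the factor $T^{-1/2}$ when $d<4$, $T^{-1/2}\log T$ when $d=4$, and $T^{-2/d}$ when $d>4$, each multiplied by $\sqrt{\sum_t \sigma_t^2}$.

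The main obstacle, and the substantive new ingredient beyond Theorem \ref{th1}, is replacing the ergodicity used in the mixing case with the block-martingale condition \eqref{a3}. I would partition $\{1,\dots,T\}$ into blocks of length $k$ and, within each block, use \eqref{a3} to lower-bound the conditional probability $\p(X_{t+j} \in \B(x,h) \cond \F_t) \geq p_0 h^d$ at the resolution $h \asymp (\log T/T)^{1/d}$ dictated by the net on $\M^*$. Combined with a Freedman-type exponential inequality for martingale differences, this shows that on an event of probability at least $1 - 8/T$ the empirical measure on $\{X_t\}$ places mass at least $p_0 h^d$ in every such ball, which is exactly what is needed to translate the uniform chaining bound into an in-sample bound on $\frac1T \sum_t d^2(X_t, \widehat\M)$. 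The discretisation scale $(\log T/T)^{1/d}$ is then responsible for the remaining terms $(\log T/T)^{1/d}$ and $(\log T/T)^{4/d}$ in the statement, while the factor $D$ in the second term records the ambient dimensionality cost of aligning the sub-Gaussian noise with the (at most $D$-dimensional) normal bundle of $\widehat\M$.
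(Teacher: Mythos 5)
Your overall architecture matches the paper's: start from the ERM basic inequality, control a sub-Gaussian cross term by chaining against the Hausdorff covering numbers of $\mclass_\varkappa^d$ (which is where the three regimes of $\psi_T$ come from), and use \eqref{a3} blockwise with Freedman's martingale inequality to guarantee that every ball of radius $h \asymp (\log T/T)^{1/d}$ on $\M^*$ captures $\gtrsim p_0 h^d \lfloor T/k\rfloor$ sample points. However, there is a genuine gap in your treatment of the tangential component of the noise. You claim the tangential cross terms have vanishing conditional expectation ``by the hypothesised independence of the tangential and normal components.'' That is not what the independence buys, and the claim is false: writing $\widehat x_t = \pi_{\widehat\M}(Y_t)$, to first order $\widehat x_t - X_t$ contains the component $\eps_t^\parallel$ itself, so $\langle \eps_t, \widehat x_t - X_t\rangle \approx \|\eps_t^\parallel\|^2 + \dots$, which is of size $\sigma_t^2 d$ per term and contributes at order $T$ after summation. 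It is not a curvature correction and cannot be absorbed into your $R_T$.

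The paper's proof is organised precisely around this difficulty. It first peels off the tangential noise, writing $d^2(Y_t,\M) = d^2(X_t+\eps_t^\parallel,\M) + 2a_{\M,t}^T\eps_t^\perp + \|\eps_t^\perp\|^2$ so that only the \emph{normal} noise enters the chained empirical process (the independence assumption is used to make this process sub-Gaussian conditionally on the tangential data, not to kill a mean). It then must relate $d^2(X_t+\eps_t^\parallel,\M)$ back to $d^2(X_t,\M)$, and the first-order error is $\|\widetilde\bpi_t^* - \widetilde\bpi_t^\M\|\,\|\eps_t^\parallel\|$, i.e.\ it is governed by the misalignment of tangent spaces between $\M$ and $\M^*$. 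This is where the small-ball occupancy bound is really spent: since each cell $A_j$ of a $2h$-packing contains $\gtrsim p_0 h^d \lfloor T/k\rfloor$ points, one gets $\|\bpi_j^* - \bpi_j^\M\|^2 \lesssim (p_0 h^{d+2} \lfloor T/k\rfloor)^{-1}\sum_{X_t\in A_j} d^2(X_t,\M)$, and choosing $h \gtrsim \sigma_{\max}\sqrt{kp_1/p_0}\sqrt{\log T}$ makes the coefficient $\|\eps_t^\parallel\|^2/h^2$ small enough that the whole term is absorbed into the left-hand side (a self-bounding step), which is also the origin of the $D(\sigma_{\max}\sqrt{\log T}\vee(\log T/T)^{1/d})\sum_t\sigma_t^2/T$ term in the statement. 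Your proposal uses the occupancy bound only to ``translate the chaining bound into an in-sample bound,'' which misses this mechanism entirely; without the tangent-alignment/self-bounding step the first-order tangential contribution is uncontrolled and the claimed rate does not follow. You also omit the preliminary localisation (Lemma~\ref{lem8} in the paper) showing $\max_{x\in\M^*} d(x,\widehat\M) \leq c\varkappa$ with high probability, which is needed before any of the locally linear approximation arguments are legitimate and is where the hypothesis on $\frac8T\sum_t\sigma_t^2 + 64D\sigma_{\max}^2$ is used.
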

	\section{Proofs}
\label{sec_proofs}

This section contains proofs of main results.

\subsection{Proof of Theorem \ref{th1}}

From the definition of $\widehat\M$, we have
\begin{align*}
	&
	\frac1T \sum\limits_{t=1}^T \E d^2(Y_t', \widehat \M) - \inf\limits_{\M \in 
	\mclass_\varkappa^d} \frac1T \sum\limits_{t=1}^T \E d^2(Y_t', \M)
	\\&
	\leq \frac1T \sum\limits_{t=1}^T \left( \E d^2(Y_t', \widehat \M) - d^2(Y_t, \widehat 
	\M)\right) - \inf\limits_{\M \in \mclass_\varkappa^d} \frac1T \sum\limits_{t=1}^T \left( 
	\E d^2(Y_t', \M) - d^2(Y_t, \widehat \M) \right)
	\\&
	\leq 2 \E\sup\limits_{\M \in \mclass_\varkappa^d} \frac1T \left| \sum\limits_{t=1}^T 
	\left( d^2(Y_t, \M) - \E d^2(Y_t, \M) \right) \right|.
\end{align*}

The proof of Theorem \ref{th1} is given in three steps.
Let $\overline\E$ be the expectation with respect to $X_1, \eps_1, \dots, X_T, \eps_T$, 
where $X_1$ is generated with respect to the stationary measure $\pi$.
On the first step, we control the discrepancy between $\E\sup\limits_{\M \in 
\mclass_\varkappa^d} \frac1T \left| \sum\limits_{t=1}^T \left( 
d^2(Y_t, \M) - \E d^2(Y_t, \M) \right) \right|$ and \break$\overline\E\sup\limits_{\M \in 
\mclass_\varkappa^d} \frac1T \left| \sum\limits_{t=1}^T \left( d^2(Y_t, \M) - \E d^2(Y_t, \M) 
\right) \right|$.
\begin{Lem}
	\label{lem3}
	\begin{align*}
	\E\sup\limits_{\M \in \mclass_\varkappa^d} \frac1T \left| \sum\limits_{t=1}^T \left( 
	d^2(Y_t, \M) - \E d^2(Y_t, \M) \right) \right|
	&
	\leq \overline\E \sup\limits_{\M \in \mclass_\varkappa^d}
	\frac1T \left| \sum\limits_{t=1}^T \left( d^2(Y_t, \M) - \overline\E d^2(Y_t, \M) 
	\right) \right|
	\\&
	+ \frac{6A}{T\rho} \left(128 \sigma_{\max}^2 D + 16R^2 \right),
	\end{align*}
	where $\sigma_{\max} = \max\limits_{1 \leq t \leq T} \sigma_t$.
\end{Lem}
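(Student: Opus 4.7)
The plan is to split the required bound into two contributions: a deterministic discrepancy arising from replacing the original centering $\mu_t(\M):=\E d^2(Y_t,\M)$ by the stationary centering $\bar\mu_t(\M):=\overline\E d^2(Y_t,\M)$ inside the supremum, and a remainder controlled by a synchronous Markov coupling of the chain $(X_t)$ started from $\p_1$ versus from $\pi$.

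For the first piece, set $g_\M(Y)=\frac1T\sum_t(d^2(Y_t,\M)-\mu_t(\M))$ and $\bar g_\M(Y)=\frac1T\sum_t(d^2(Y_t,\M)-\bar\mu_t(\M))$. The reverse triangle inequality applied inside the supremum gives $\sup_\M|g_\M|\leq \sup_\M|\bar g_\M|+\sup_\M\bigl|\frac1T\sum_t(\mu_t-\bar\mu_t)(\M)\bigr|$, where the last term is deterministic. Writing $\mu_t(\M)-\bar\mu_t(\M)=\int f_\M^{(t)}(x)(d\p_t-d\pi)(x)$ with $f_\M^{(t)}(x):=\E d^2(x+\eps_t,\M)$ and noting that $f_\M^{(t)}$ is bounded on $\M^*$ by $8R^2+2D\sigma_t^2$ (since $\M^*,\M\subseteq\B(0,R)$ and $\E\|\eps_t\|^2\leq D\sigma_t^2$ by \eqref{a4}), the total variation estimate \eqref{a2} yields $|\mu_t(\M)-\bar\mu_t(\M)|\leq (8R^2+2D\sigma_{\max}^2)A^2(1-\rho)^t$. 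Summing and using $\sum_{t\geq 1}(1-\rho)^t\leq 1/\rho$ produces a deterministic contribution of order $A^2(R^2+D\sigma_{\max}^2)/(T\rho)$ to the difference $\E G-\E\bar G$.

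For the second piece I construct a coupling $(X_t,\bar X_t)$ whose marginals are the chain started from $\p_1$ and from $\pi$ respectively: sample $(X_1,\bar X_1)$ from an optimal coupling of $\p_1$ and $\pi$, then propagate by the common transition kernel and share the innovations $\eps_t$. On the event $\{X_1=\bar X_1\}$, which has probability at least $1-A^2(1-\rho)$, the two trajectories (and hence the values of $\bar G$) coincide. On the complementary event I use the crude envelope $\bar G\leq 16R^2+4D\sigma_{\max}^2+\frac2T\sum_t\|\eps_t\|^2$, which follows from $d^2(Y_t,\M)\leq 8R^2+2\|\eps_t\|^2$ on $\{X_t\in\M^*\subseteq\B(0,R)\}$ together with the analogous bound on $\bar\mu_t(\M)$. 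Since the noise is independent of the coupling event, its indicator factors out of the expectation, giving $|\E\bar G-\overline\E\bar G|\lesssim A^2(R^2+D\sigma_{\max}^2)$; after using $(1-\rho)\leq 1\leq 1/\rho$ and summing with the first piece, consolidation of constants yields the stated inequality.

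The main obstacle is the unboundedness of $d^2(Y_t,\M)$ under the sub-Gaussian noise \eqref{a4}, which invalidates the naive "$\|F\|_\infty\cdot\|P-Q\|_{TV}$" comparison between expectations under different laws. The resolution, applied uniformly in both steps, is to integrate out $\eps_t$ first (so that in Step~1 only the bounded function $f_\M^{(t)}$ is paired with the TV distance) or to exploit the independence of $\eps_t$ from the coupling event (so that in Step~2 only $\E\|\eps_t\|^2\leq D\sigma_{\max}^2$ multiplies the probability of uncoupling). In both cases the sub-Gaussianity intervenes only through the second moment, which is what allows the $R^2$ and $D\sigma_{\max}^2$ factors in the final bound to appear with bounded constants.
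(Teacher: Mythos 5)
Your first step---recentring by replacing $\E d^2(Y_t,\M)$ with $\overline\E d^2(Y_t,\M)$, integrating out the noise first so that only the bounded function $f_\M^{(t)}(x)=\E d^2(x+\eps_t,\M)\leq 8R^2+2D\sigma_t^2$ is paired with $\|\p_t-\pi\|_{TV}$---is sound, and is in fact linear in the total variation distance where the paper's own argument (its Lemma \ref{lem1}, a Cauchy--Schwartz bound requiring fourth moments of $d(Y_t,\M)$) only extracts the square root $A(1-\rho)^{t/2}$. Up to the replacement of the stated factor $A$ by $A^2$, this piece correctly contributes $O\bigl((R^2+D\sigma_{\max}^2)/(T\rho)\bigr)$.

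The second step, however, has a genuine gap: it loses the factor $1/T$. You couple the two chains only at time $1$ and, on the event $\{X_1\neq \overline X_1\}$, charge the full envelope of $G=\sup_{\M}\bigl|\frac1T\sum_t\bigl(d^2(Y_t,\M)-\overline\E d^2(Y_t,\M)\bigr)\bigr|$, which is of order $R^2+D\sigma_{\max}^2$ uniformly in $T$. Since $\p(X_1\neq\overline X_1)=\|\p_1-\pi\|_{TV}$ is a constant independent of $T$, this yields $|\E G-\overline\E G|\lesssim A^2(1-\rho)(R^2+D\sigma_{\max}^2)$, a quantity that does not vanish as $T\to\infty$, whereas the lemma asserts a bound of order $1/(T\rho)$; the inequality $(1-\rho)\leq 1/\rho$ cannot manufacture the missing $1/T$. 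What you are discarding is that the discrepancy between the law of $X_t$ and $\pi$ decays geometrically in $t$: the paper exploits this by bounding the supremum of the sum by the sum of suprema and comparing the two laws term by term via Lemma \ref{lem1}, so the $t$-th time step contributes $O\bigl((\sigma_t^2D+R^2)(1-\rho)^{t/2}\bigr)$, the sum over $t$ is $O(1/\rho)$, and dividing by $T$ gives the stated rate. A coupling proof can be repaired along the same lines, but you would need a coupling whose coalescence time $\tau$ satisfies $\E\tau\lesssim 1/\rho$ and then bound $|G-\overline G|$ by the envelope restricted to the pre-coupling times $t\leq\tau$, giving $\E|G-\overline G|\lesssim \E\tau\cdot(R^2+D\sigma_{\max}^2)/T$; your synchronous coupling (common kernel run from two different starting points with shared innovations) does not guarantee that the trajectories ever meet, so it cannot play this role as stated.
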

The proof of Lemma \ref{lem3} is moved to Appendix \ref{lem3_proof}.
If the initial state $X_1$ of the Markov chain is drawn from the distribution $\pi$ then 
$X_1, \dots, X_T$ are identically distributed random elements though still dependent.
Let $K$ be and integer to be specified later and split the set $\{1, \dots, T\}$ into blocks 
$B_1, \dots, B_K$, where
\[
	B_k = \left\{ k, k + K, k + 2K, \dots \right\}, \quad \forall \, k \in \{1, \dots, 
	K\},
\]
and the size of each block is either $\lfloor T/K \rfloor$ or $\lceil T/K \rceil$.
Then we have
\begin{align*}
	&
	\overline\E \sup\limits_{\M \in \mclass_\varkappa^d} \left| \sum\limits_{t=1}^T 
	\left( d^2(Y_t, \M) - \overline\E d^2(Y_t, \M) \right) \right|
	\\&
	\leq \sum\limits_{k=1}^K \overline\E \sup\limits_{\M \in \mclass_\varkappa^d} 
	\left| \sum\limits_{t\in B_k}\left( d^2(Y_t, \M) - \overline\E d^2(Y_t, \M) \right) 
	\right|.
\end{align*}
The following result shows that one can replace $X_t$'s inside one block $B_k$ by i.i.d. 
copies $\widetilde X_1, \dots, \widetilde X_T$ drawn from the stationary distribution 
$\pi$.
\begin{Lem}
	\label{lem4}
	It holds that
	\begin{align*}
		&
		\overline\E \sup\limits_{\M \in \mclass_\varkappa^d} \left| \sum\limits_{t=1}^T 
		\left( d^2(Y_t, \M) - \overline\E d^2(Y_t, \M) \right) \right|
		\\&
		\leq TA (1 - \rho)^K + \sum\limits_{k=1}^K \widetilde\E \sup\limits_{\M \in 
		\mclass_\varkappa^d} \left| \sum\limits_{t\in B_k}\left( d^2(Y_t, \M) - \widetilde\E 
		d^2(Y_t, \M) \right) \right|,
	\end{align*}
	where the expectation $\widetilde \E$ is taken with respect to $\{\widetilde X_t, 
	\eps_t : t \in \B_k\}$ and $\widetilde X_t$, $t \in B_k$, are i.i.d. copies of $X_t$, $t \in 
	B_k$.
\end{Lem}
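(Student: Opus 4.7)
The plan is the classical blocking argument paired with a total-variation coupling. Since the block decomposition $\sum_{t=1}^T = \sum_{k=1}^K\sum_{t\in B_k}$ and the triangle inequality for the supremum have already been carried out just above the lemma, it is enough to bound each block contribution
\[
	\overline\E \sup_{\M \in \mclass_\varkappa^d} \left|\sum_{t\in B_k}\bigl(d^2(Y_t,\M) - \overline\E d^2(Y_t,\M)\bigr)\right|
\]
by the corresponding $\widetilde\E$-quantity plus an error proportional to $|B_k|(1-\rho)^K$; summing over $k$ and using $\sum_k|B_k|=T$ then produces the $TA(1-\rho)^K$ term, with universal and $A$-dependent constants absorbed into the $A$ written in the statement.

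For a fixed block $B_k$, the successive samples of $(X_t)_{t\in B_k}$ are $K$ time steps apart. Under $\overline\E$ the chain is started from the stationary distribution $\pi$, so every $X_t$ is marginally $\pi$. Applying assumption (A2) with the deterministic initial condition $\delta_x$ gives, uniformly in $x$,
\[
	\|\p(X_{t+K}\in\cdot\mid X_t=x) - \pi\|_{TV} \leq A^2(1-\rho)^K,
\]
and iterating this bound via the Markov property yields
\[
	\|\mathrm{Law}((X_t)_{t\in B_k}) - \pi^{\otimes|B_k|}\|_{TV} \leq (|B_k|-1)\,A^2\,(1-\rho)^K.
\]
A maximal coupling now produces i.i.d. draws $(\widetilde X_t)_{t\in B_k}\sim\pi$ that equal $(X_t)_{t\in B_k}$ except on an event of probability at most the right-hand side. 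I would then condition on the noise vector $(\eps_t)_{t\in B_k}$, which is independent of the chain, so that the map
\[
	(X_t)_{t\in B_k}\;\mapsto\; \sup_{\M\in\mclass_\varkappa^d}\left|\sum_{t\in B_k}\bigl(d^2(Y_t,\M)-\overline\E d^2(Y_t,\M)\bigr)\right|
\]
becomes uniformly bounded thanks to $X_t\in\M^*\subseteq\B(0,R)$ and the elementary estimate $d(Y_t,\M)\leq 2R+\|\eps_t\|$. The classical inequality $|\overline\E f - \widetilde\E f|\leq 2\|f\|_\infty\cdot\mathrm{TV}$ applied to the conditional laws, combined with integration against the sub-Gaussian tails of $\eps_t$ supplied by (A4), delivers the per-block error.

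The main obstacle is controlling a supremum of an unbounded statistic while exchanging laws through total variation: the envelope $d^2(Y_t,\M)$ depends on $\eps_t$ and is therefore not uniformly bounded under (A4). The conditioning-on-noise device above is the key step, since it reduces the supremum over the infinite-dimensional class $\mclass_\varkappa^d$ to a bounded function of the Markov chain alone, whereupon the linear-in-$|B_k|$ total-variation estimate translates cleanly into the per-block error. Summing over $k=1,\dots,K$ then produces the claimed $TA(1-\rho)^K$ correction.
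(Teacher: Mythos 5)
Your proof follows essentially the same route as the paper's: telescope the one-step mixing bound from \eqref{a2} across the block to control the total-variation distance between the joint law of $(X_t)_{t\in B_k}$ and the product of its marginals by $N_k A(1-\rho)^K$ (the paper invokes Corollary F.3.4 of Douc et al.\ where you build a maximal coupling by hand, but it is the same peeling argument), and then convert the total-variation bound into a bound on the difference of expectations. Your conditioning-on-the-noise device for the unbounded envelope is in fact more explicit than the paper, which passes from the TV bound to the expectation bound without comment; be aware only that this step honestly costs an extra moment factor of order $N_k(R^2+\sigma_{\max}^2 D)$ multiplying $(1-\rho)^K$, which is not a universal constant absorbable into ``$A$'' but is harmless because $K$ may be enlarged by a constant factor without affecting the final rates.
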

The proof of Lemma \ref{lem4} is moved to Appendix \ref{lem4_proof}.
Finally, we control
\[
	\E \sup\limits_{\M \in \mclass_\varkappa^d} \left| \sum\limits_{t\in 
	B_k}\left( d^2(Y_t, \M) - \widetilde\E d^2(Y_t, \M) \right) \right|
\]
for each block $B_k$.
\begin{Lem}
	\label{lem5}
	Assume that $B_k = \{t_1, \dots, t_{N_k}\}$, where $N_k$ is the cardinality of $B_k$.
	Then, for any $k \in \{1, \dots, K\}$, it holds that
	\[
		\widetilde \E \sup\limits_{\M \in \mclass_\varkappa^d} \left| \sum\limits_{t \in B_k} 
		d^2 (Y_t, \M) - \widetilde \E d^2 (Y_t, \M) \right|
		\lesssim
		\begin{cases}
		D\sqrt{\sum\limits_{t\in B_k} \sigma_t^2} + R\sqrt{D N_k}, \quad d < 4,\\
		\left(D\sqrt{\sum\limits_{t\in B_k} \sigma_t^2} + R\sqrt{D N_k}\right) \log 
		\psi_k^{-1}, \quad d = 4,\\
		\left(D\sqrt{\sum\limits_{t\in B_k} \sigma_t^2} + R\sqrt{D N_k}\right) 
		\psi_k^{1-4/d}, \quad d > 4,
		\end{cases}
	\]
	where
	\[
		\psi_k = \frac{R\sqrt{D N_k} + D \sqrt{\sum\limits_{t \in B_k} \sigma_t^2}}{R N_k + 
		\sqrt{D} \sum\limits_{t \in B_k} \sigma_t}.
	\]
\end{Lem}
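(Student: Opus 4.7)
My strategy is to bound the supremum by symmetrization followed by a chaining (Dudley) argument over the class $\mclass_\varkappa^d$, using the Hausdorff metric entropy of this class as the main complexity input. The three cases $d<4$, $d=4$, $d>4$ correspond precisely to the three regimes of Dudley's integral against the entropy $\log N(\delta) \asymp (\varkappa/\delta)^d$.

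\textbf{Step 1: Reduction to a Rademacher process.} Since $\{Y_t : t \in B_k\}$ are i.i.d.\ under $\widetilde\E$, I would apply the standard symmetrization inequality to obtain
\[
\widetilde\E \sup_{\M \in \mclass_\varkappa^d} \Big| \sum_{t\in B_k}\left( d^2(Y_t,\M) - \widetilde\E d^2(Y_t,\M) \right)\Big|
\;\leq\; 2\,\widetilde\E \sup_{\M \in \mclass_\varkappa^d} \Big| \sum_{t\in B_k} \zeta_t \, d^2(Y_t,\M) \Big|,
\]
where $\zeta_t$ are independent Rademacher signs. Centring by subtracting $d^2(\cdot,\M_0)$ for a fixed reference $\M_0 \in \mclass_\varkappa^d$ does not change the right-hand side.

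\textbf{Step 2: Lipschitz control in Hausdorff distance and envelope.} Using the reach assumption and the projection characterisation of $d(\cdot,\M)$, one shows the pointwise Lipschitz bound
\[
\bigl|d^2(y,\M_1) - d^2(y,\M_2)\bigr| \;\lesssim\; \bigl(\|y\| + R\bigr)\, d_H(\M_1,\M_2),
\]
where $d_H$ is Hausdorff distance. Conditional on $Y_t$'s, the Rademacher process indexed by $\M$ is therefore sub-Gaussian with metric
\[
\rho(\M_1,\M_2)^2 \;\lesssim\; d_H(\M_1,\M_2)^2 \sum_{t\in B_k}(\|Y_t\|+R)^2.
\]
Taking expectations and using assumption~\eqref{a4} gives $\widetilde\E \sum_t (\|Y_t\|+R)^2 \lesssim R^2 N_k + D \sum_{t\in B_k} \sigma_t^2$, so the effective noise scale of the process is $S_k := R\sqrt{N_k} + \sqrt{D \sum_{t\in B_k} \sigma_t^2}$.

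\textbf{Step 3: Chaining and case analysis.} For the class $\mclass_\varkappa^d$ I would invoke the standard covering-number bound in Hausdorff distance (as in Genovese et al.\ and Aamari--Levrard), namely $\log N(\delta, \mclass_\varkappa^d, d_H) \lesssim (\varkappa/\delta)^d$ for small $\delta$, and apply Dudley's entropy integral to the Rademacher process,
\[
\widetilde\E \sup_{\M} \Big|\sum_{t\in B_k} \zeta_t\, d^2(Y_t,\M)\Big| \;\lesssim\; S_k \int_0^{\mathrm{diam}} \sqrt{\log N(\delta, \mclass_\varkappa^d, d_H)}\, d\delta.
\]
The integrand is $\delta^{-d/2}$, which is integrable near zero when $d<4$ (giving the parametric-type bound proportional to $S_k$), logarithmic at $d=4$, and requires truncation at a level $\psi_k$ when $d>4$. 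The truncation threshold $\psi_k$ is obtained by balancing the chaining integral against the envelope of the class, which yields exactly the quantity displayed in the statement (the denominator $R N_k + \sqrt{D}\sum_t \sigma_t$ is the $L^1$-type envelope of $\sum_t d^2(Y_t,\M)$, while the numerator is $S_k \sqrt{D}$ after multiplying by the Lipschitz prefactor $D$ coming from the Dudley bound on the squared process).

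\textbf{Main obstacle.} The principal difficulty is that the function class has no uniform envelope: the sub-Gaussian noise $\eps_t$ makes $d^2(Y_t,\M)$ unbounded, so one cannot directly apply a bounded-differences chaining bound. My plan would be to combine the chaining with a truncation argument on $\|\eps_t\|$ at level $\sigma_t\sqrt{D \log T}$, use sub-Gaussian concentration to discard the tail contribution, and on the truncated event work with the effective scale $S_k$. Keeping track of the interplay between the dimensional factor $D$ (from $\E\|\eps_t\|^2 \lesssim D\sigma_t^2$), the radius $R$, and the truncation level $\psi_k$ when $d > 4$ is the most delicate part and is what produces the explicit three-regime form of the bound.
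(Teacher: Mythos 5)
Your overall architecture coincides with the paper's: symmetrize to a Rademacher process, use the $1$-Lipschitz property of $\M\mapsto d(y,\M)$ in Hausdorff distance to get $|d^2(Y_t,\M)-d^2(Y_t,\M')|\lesssim(\|\eps_t\|+R)\,d_H(\M,\M')$, run Dudley's chaining against the covering numbers of $\mclass_\varkappa^d$ in $d_H$, and split into three regimes by truncating the entropy integral at a level $\gamma$ balanced against the linear term $\gamma\sum_t(\|\eps_t\|+R)$. However, there is a concrete quantitative error at the heart of Step 3: you take $\log \N(\delta,\mclass_\varkappa^d,d_H)\lesssim(\varkappa/\delta)^{d}$, so your integrand $\sqrt{\log\N}$ is $\delta^{-d/2}$, and $\int_0\delta^{-d/2}\,\dd\delta$ converges only for $d<2$, not $d<4$ as you assert. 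The bound actually used in the paper (Theorem 9 of Genovese et al.) has $\log\N(u)\lesssim u^{-d/2}$, so the integrand is $u^{-d/4}$ and the phase transition correctly lands at $d=4$; with your stated entropy the case split of the lemma would come out at $d=2$ and the claimed rates would not follow. This needs to be fixed for the proof to produce the statement.

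A second, smaller point: the ``main obstacle'' you identify (no uniform envelope, hence truncation of $\|\eps_t\|$ at level $\sigma_t\sqrt{D\log T}$) is not needed and would introduce spurious $\log T$ factors absent from the statement. The chaining bound is applied conditionally on the noise, so it is linear in the random scale $\bigl(\sum_{t\in B_k}(\|\eps_t\|+2R)^2\bigr)^{1/2}$ and in $\sum_{t\in B_k}(\|\eps_t\|+2R)$; one then simply takes $\widetilde\E$ of these quantities using $\E\|\eps_t\|^2\lesssim D\sigma_t^2$, which is exactly how the numerator and denominator of $\psi_k$ arise. No tail-splitting is required.
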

Proof of Lemma \ref{lem5} can be found in Appendix \ref{lem5_proof}.
Take $K = \lceil\log(1/TA) / \log(1 - \rho) \rceil$.
Then
\[
	\widetilde \E \sup\limits_{\M \in \mclass_\varkappa^d} \left| \sum\limits_{t \in B_k} 
	d^2 (Y_t, \M) - \widetilde \E d^2 (Y_t, \M) \right|
	\lesssim
	\begin{cases}
		\sqrt{\frac{D T \log(1/(1 - \rho))}{\log T}}, \quad d < 4,\\
		\sqrt{D T \log(1/(1 - \rho))\log T}, \quad d = 4,\\
		\left(\frac{T \log(1/(1 - \rho))}{\log T}\right)^{1 - 2/d}, \quad d > 4,
	\end{cases}
\]
and the claim of Theorem \ref{th1} follows from Lemmata \ref{lem3}, \ref{lem4}, and 
\ref{lem5}.

\subsection{Proof of Theorem \ref{th2}}

For any $t \in \{1, \dots, T\}$, let $\bpi_t$ be the projector onto $\T_{X_t}\M^*$.
Denote $\eps_t\parallel = \bpi_t \eps_t$ and $\eps_t^\perp = (\bi - \bpi_t)\eps_t$.
Then, for all $\M \in \mclass_\varkappa^d$, it holds that
\begin{equation}
	\label{th2_1}
	d^2(Y_t, \M) = d^2(X_t + \eps_t^\parallel, \M) + 2 a_{\M, t}^T \eps_t^\perp + 
	\|\eps_t^\perp\|^2, \quad \forall \, t 
	\in \{1, \dots, T\},
\end{equation}
where $a_{\M, t} = X_t + \eps_t^\parallel - \proj\M{X_t + \eps_t^\parallel}$.
On the other hand, due to the Cauchy-Schwartz inequality, we have
\begin{equation}
	\label{th2_2}
	d^2(Y_t, \M^*) \leq \left(1 + h^{-1}\right) d^2(X_t + \eps_t^\parallel, \M^*) + (1 + h) 
	\|\eps_t^\perp\|^2, \quad \forall \, t \in \{1, \dots, T\},
\end{equation}
where $h$ is a parameter to be specified later.
The inequalities \eqref{th2_1} and \eqref{th2_2} yield
\[
	d^2(X_t + \eps_t^\parallel, \M)
	\leq 2 a_{\M, t}^T \eps_t^\perp + d^2(Y_t, \M) - d^2(Y_t, \M^*)
	+ (1 + h^{-1}) d^2(X_t + \eps_t^\parallel, \M^*) +  h\|\eps_t^\perp\|^2.
\]
The fact that the reach of $\M^*$ is not less than $\varkappa$ implies that a sphere of 
radius $\varkappa$ rolls freely over the surface of $\M^*$.
Thus, if $\|\eps_t^\parallel\| \leq \varkappa$, we have $d(X_t + \eps_t^\parallel, \M^*) \leq 
2\|\eps_t^\parallel\|^2/\varkappa$.
Then, since $\M^* \subset \B(0, R)$, it holds that
\[
	d(X_t + \eps_t^\parallel, \M^*) \leq \frac{2\|\eps_t^\parallel\|^2}\varkappa + 2R \, 
	\1\left( \|\eps_t^\parallel\| \geq \varkappa \right)
\]
and we obtain
\begin{align}
	\label{th2_3}
	\sum\limits_{t=1}^T d^2(X_t + \eps_t^\parallel, \M)
	&
	\leq 2\sum\limits_{t=1}^T a_{\M, t}^T \eps_t^\perp + \sum\limits_{t=1}^T \left( d^2(Y_t, 
	\M) - d^2(Y_t, \M^*) \right)
	\\&\notag
	+ \sum\limits_{t=1}^T  \left((1 + h^{-1}) \left( \frac{2\|\eps_t^\parallel\|^2}\varkappa + 
	2R \, \1\left( \|\eps_t^\parallel\| \geq \varkappa \right) \right)^2 +  h\|\eps_t^\perp\|^2 
	\right).
\end{align}

Let $\widetilde \bpi_t^*$ and $\widetilde \bpi_t^\M$ be the projectors onto 
$\T_{X_t}\M^*$ and $\T_{\proj\M{X_t}}\M$, respectively.
Then
\[
	\left| d(X_t + \eps_t^\parallel, \M) - d(X_t, \M) \right|
	\leq \|\widetilde\bpi_t^* - \widetilde\bpi_t^\M\| \|\eps_t^\parallel\| + 
	\frac{2\|\eps_t^\parallel\|^2}\varkappa + 
	2R \1\left( \|\eps_t^\parallel\| \geq \varkappa \right).
\]
Then, for any $t \in \{1, \dots, T\}$, we have
\begin{align}
	\label{th2_4}
	d^2(X_t, \M)
	\leq 2d^2(X_t + \eps_t^\parallel, \M) + 2\left(\|\widetilde\bpi_t^* - 
	\widetilde\bpi_t^\M\| \|\eps_t^\parallel\| 
	+ \frac{2\|\eps_t^\parallel\|^2}{\varkappa} + 2R \1\left( \|\eps_t^\parallel\| \geq 
	\varkappa \right)\right)^2.
\end{align}

\begin{Lem}
	\label{lem8}
	Assume that there exists a constant $c \in (0, 1)$ such that
	\[
		\frac8T \sum\limits_{t=1}^T \sigma_t^2 + 64D\sigma_{\max}^2 \leq \frac{p_0}{8k} 
		\left(\frac{c\varkappa}4 \right)^{d+2}.
	\]
	Then, for the ERM $\widehat\M$, defined in \eqref{hatm}, it holds that
	\[
		\p\left( \max\limits_{x\in \M^*} d(x, \widehat\M) \geq c\varkappa \right)
		\leq \frac{4^d V}{(c\varkappa)^d} e^{-\frac{p_0 (c\varkappa/4)^d}{12} \lfloor 
		T/k\rfloor} + \frac1T.
	\]
\end{Lem}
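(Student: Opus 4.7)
My plan is a covering-plus-contradiction argument combining a net of $\M^*$, a multiplicative Chernoff lower bound on visit counts derived from \eqref{a3}, and sub-Gaussian concentration of the noise. I first reduce the event $E := \{\max_{x \in \M^*} d(x, \widehat\M) \geq c\varkappa\}$ to a union bound over a finite set: take a maximal $c\varkappa/4$-packing $\N \subset \M^*$, which is also a $c\varkappa/4$-net. The standard volume lower bound $\text{Vol}(\B(x,r)\cap\M^*) \gtrsim r^d$ for manifolds with reach $\geq \varkappa$, combined with $\text{Vol}(\M^*) \leq V$ from \eqref{a1}, yields $|\N| \leq 4^d V/(c\varkappa)^d$. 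On $E$, any $x^\star \in \M^*$ realising the supremum has a nearest net point $x_0$ with $d(x_0, \widehat\M) \geq 3c\varkappa/4$, so every $x \in \M^* \cap \B(x_0, c\varkappa/4)$ satisfies $d(x, \widehat\M) \geq c\varkappa/2$. Thus $\p(E) \leq \sum_{x_0 \in \N} \p(E_{x_0})$ with $E_{x_0} := \{d(x_0, \widehat\M) \geq 3c\varkappa/4\}$.

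Next, fix $x_0 \in \N$ and let $N_{x_0} = |\{t : X_t \in \B(x_0, c\varkappa/4)\}|$. Split $\{1,\dots,T\}$ into $m := \lfloor T/k \rfloor$ consecutive blocks $B_1,\dots,B_m$ of length $k$ and set $W_\ell := \1\{\exists\, t \in B_\ell : X_t \in \B(x_0,c\varkappa/4)\}$. Assumption \eqref{a3} applied at $t = (\ell-1)k$ gives $\sum_{j=1}^k \p(X_{(\ell-1)k+j}\in\B(x_0,c\varkappa/4)\mid\F_{(\ell-1)k}) \geq k p_0(c\varkappa/4)^d$, and since each summand is bounded by $\E[W_\ell\mid\F_{(\ell-1)k}]$ this forces $\E[W_\ell\mid\F_{(\ell-1)k}] \geq p_0(c\varkappa/4)^d$. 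The lower-tail multiplicative Chernoff bound for Bernoulli martingales with conditionally lower-bounded means — using $\E[e^{-\lambda W_\ell}\mid\F_{(\ell-1)k}] \leq \exp(-p_0(c\varkappa/4)^d(1-e^{-\lambda}))$, the tower property, and Markov's inequality at $\lambda = \log 2$ — then yields
\[
    \p\bigl(N_{x_0} \leq \tfrac12 mp_0(c\varkappa/4)^d\bigr) \leq \p\bigl(\textstyle\sum_\ell W_\ell \leq \tfrac12 mp_0(c\varkappa/4)^d\bigr) \leq \exp\bigl(-mp_0(c\varkappa/4)^d/12\bigr).
\]

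To convert this into a bound on $\p(E_{x_0})$, define a good noise event $G$ on which $\max_t \|\eps_t\| \leq c\varkappa/4$ and $\sum_t \|\eps_t\|^2 \leq \tfrac14 mp_0(c\varkappa/4)^{d+2}$. Sub-Gaussian tails for $\|\eps_t\|^2$ via a union bound over $t$, and a Hanson--Wright bound for the sum, together with the two terms of the hypothesis — $\tfrac{8}{T}\sum \sigma_t^2$ controlling the aggregate and $64D\sigma_{\max}^2$ controlling the maximum — give $\p(G^c) \leq 1/T$. Suppose $G$, $E_{x_0}$, and $N_{x_0} \geq \tfrac12 mp_0(c\varkappa/4)^d$ all hold: then for each $t$ with $X_t \in \B(x_0,c\varkappa/4)$, $d(Y_t,\widehat\M) \geq d(X_t,\widehat\M) - \|\eps_t\| \geq c\varkappa/4$, so $\sum_t d^2(Y_t,\widehat\M) \geq N_{x_0}(c\varkappa/4)^2 \geq \tfrac12 mp_0(c\varkappa/4)^{d+2}$. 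But $d^2(Y_t,\M^*) \leq \|\eps_t\|^2$ (since $X_t \in \M^*$) and the ERM property \eqref{hatm} give $\sum_t d^2(Y_t,\widehat\M) \leq \sum_t \|\eps_t\|^2 \leq \tfrac14 mp_0(c\varkappa/4)^{d+2}$, a contradiction. Hence $G \cap E_{x_0}$ is contained in $\{N_{x_0} < \tfrac12 mp_0(c\varkappa/4)^d\}$, and a union bound over $\N$ combined with $\p(G^c) \leq 1/T$ produces the claim.

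The technical heart — and the step I expect to be the main obstacle — is the lower-tail multiplicative Chernoff bound in the second paragraph. One must lift the $k$-step conditional-average bound of \eqref{a3} into a one-step conditional bound on the block indicator $W_\ell$, then work with the Bernoulli-martingale MGF carefully so that only the lower bound on conditional means is invoked; tracking constants to hit the exponent $1/12$ requires some care. Everything else reduces to standard volume comparison on manifolds with positive reach, ERM optimality, and sub-Gaussian concentration.
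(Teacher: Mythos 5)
Your proposal follows essentially the same route as the paper's proof: a net/union bound over $\M^*$ combined with a block-indicator martingale Chernoff bound extracted from \eqref{a3} (this is precisely the content of the paper's Lemmas \ref{lem6} and \ref{lem7}), then the ERM inequality $\sum_t d^2(Y_t,\widehat\M)\le\sum_t d^2(Y_t,\M^*)\le\sum_t\|\eps_t\|^2$ played against the guaranteed mass of sample points in a ball around a far-from-$\widehat\M$ point, and finally concentration of $\sum_t\|\eps_t\|^2$ under the displayed hypothesis. The one step that does not go through as written is your good event $G$ demanding $\max_t\|\eps_t\|\le c\varkappa/4$ with probability $1-O(1/T)$: a union bound over $t$ forces a condition of the form $\sigma_{\max}^2\,(D+\log T)\lesssim(c\varkappa)^2$, and the hypothesis of the lemma contains no $\log T$ term in front of $\sigma_{\max}^2$, so for fixed $\sigma_{\max}$ and large $T$ the bound $\p(G^c)\le 1/T$ fails. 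The paper sidesteps this entirely by never controlling individual noise norms: it uses the elementary inequality $d^2(Y_t,\M)\ge\tfrac12 d^2(X_t,\M)-\|\eps_t\|^2$, so that only the average $\tfrac1T\sum_t\|\eps_t\|^2$ must be small, which is exactly what the hypothesis together with Lemma \ref{lem_conc} delivers (with $\delta=1/T$, absorbing the residual $\tfrac{64}{T}\sqrt{\sum_t\sigma_t^4}\,\log T$ term for $T$ large). Replacing your triangle-inequality step and the $\max_t\|\eps_t\|$ requirement by this squared version makes your argument coincide with the paper's; the rest (net cardinality, block Chernoff with exponent $1/12$, contradiction with the ERM property) is correct as proposed.
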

The proofs of auxilary results related to the proof of Theorem \ref{th2} are moved to 
Appendix \ref{th2_proofs}. We assume that $T$ is large enough, so it holds $\frac{4^d 
V}{(c\varkappa)^d} 
e^{-\frac{p_0 (c\varkappa/4)^d}{12} \lfloor T/k\rfloor} \leq 1/T$.

From now on, we can restrict our attention on the event when $\widehat \M \subset \M^* 
+ \B(0, c\varkappa)$.
Lemma \ref{lem8} guarantees that the probability of this event is close to $1$.
Let $\{Z_j^* \in \M : 1 \leq j \leq N\}$ be the maximal $(2h)$-packing on $\M^*$, where 
$h$ is a parameter to be specified later.
Split the manifold $\M^*$ into $N$ disjoint subsets $\{ A_j : 1 \leq j \leq N\}$, such 
that $\B(Z_j^*, h) \subseteq A_j \subseteq \B(Z_j^*, 2h)$.
The existence of such partition follows from the fact that, on one hand, for any $i \neq j$, 
the balls $\B(Z_i^*, h)$ and $\B(Z_j^*, h)$ do not intersect and, on the other hand, for 
any $x \in \M^*$ there exists $j \in \{1, \dots, N\}$ such that $x \in \B(Z_j^*, 2h)$.
For any $x \in \M^*$ and $r > 0$, denote $F_{r}(x) = \B(x, r) \cap (\{x\} + \T_x^\perp\M^*)$ 
and $F_j = \cup_{x\in A_j} F_{c\varkappa}(x)$.
Thus, $\M^* + \B(0, c\varkappa)$ is the union of the disjoint sets $F_j$, $1 \leq j \leq N$.
Fix any $j \in \{1, \dots, N\}$.
For each $\M \in \mclass_\varkappa^d$, we can construct its locally linear approximation.
Namely, let $Z_j^\M \in \M \cap F_j$ be such that the projection of $Z_j^\M$ is equal to 
$Z_j^*$ and denote the projector onto the tangent space $\T_{Z_j^\M}\M$ by $\bpi_j^\M$.
We write $\bpi_j^*$ instead of $\bpi_j^{\M^*}$ for brevity.

Consider any $\M \in \mclass_\varkappa^d$.
Due to Theorem 4.18 in \cite{f59}, for any $j \in \{1, \dots, N\}$ and 
for any $X_t \in A_j$, we have $d_H\left( \M\cap F_j, (\{Z_j^\M\} + \T_{Z_j^\M}\M) \cap F_j 
\right) \leq 2h^2/\varkappa$.
Then it holds that
\[
	\left| d(X_t, \M) - d(X_t, \{Z_j^\M\} + \T_{Z_j^\M}\M) \right| \leq d_H\left( \M\cap F_j, 
	(\{Z_j^\M\} + 
	\T_{Z_j^\M}\M) \cap F_j \right) \leq \frac{C h^2}\varkappa
\]
for some absolute constant $C$.
Using the Cauchy-Schwartz inequality and Theorem 4.18 in \cite{f59}, we obtain
\begin{align*}
	&
	d^2(X_t, \M)
	\geq \frac12 d^2(X_t, \{Z_j^\M\} +\T_{Z_j^\M}\M) - \frac{C^2h^4}{\varkappa^2}
	\\&
	\geq \frac14 d^2(\proj{\{Z_j^*\} +\T_{Z_j^*}\M^*}{X_t}, \{Z_j^\M\} +\T_{Z_j^\M}\M) 
	- \frac{(C^2 + 2) h^4}{\varkappa^2}
	\\&
	= \frac14 d^2(Z_j^* +\bpi_j^*(X_t - Z_j^*), \{Z_j^\M\} +\T_{Z_j^\M}\M) 
	- \frac{(C^2 + 2) h^4}{\varkappa^2}
	\\&
	\gtrsim \|(\bi - \bpi_j^\M)\bpi_j^* (X_t - Z_j^*)\|^2 - \frac{h^4}{\varkappa^2}.
\end{align*}
Using Lemma 3.5 in \cite{blw18}, we obtain
\begin{align*}
	&
	\sum\limits_{X_t \in A_j} \|\widetilde\bpi_t^* - \bpi_j^\M\|^2
	\lesssim \sum\limits_{X_t \in A_j} \|\bpi_j^* - \bpi_j^\M\|^2 + \frac{h^2}{\varkappa^2}
	\\&
	= \sum\limits_{X_t \in A_j} \|(\bi - \bpi_j^\M) \bpi_j^*\|^2 + \frac{h^2}{\varkappa^2}.
\end{align*}
Let $u_1, \dots, u_d$ be an orthonormal basis in $\T_{Z_j^*}\M^*$.
Then
\[
	\|(\bi - \bpi_j^\M) \bpi_j^*\|^2
	\leq d \max\limits_{u \in \{u_1, \dots, u_d\}} \|(\bi - \bpi_j^\M) \bpi_j^* u \|^2.
\]
We prove an upper bound for the right hand side using the following result.
\begin{Lem}
	\label{lem7}
	Assume that $\{ X_t : 1 \leq t \leq T\} \subset \M^*$ satisfies \eqref{a3} and let $h \leq 
	2\varkappa$.
	Then it holds that
	\[
		\p\left( \exists \, x \in \M^* : \sum\limits_{t=1}^T \1\left( X_t \in \B(x, 2h) \right) < 
		(p_0 h^d) \lfloor T/k\rfloor \right)
		\leq \frac{2^d V}{h^d} e^{-\frac{p_0h^d}{12} \lfloor T/k\rfloor}.
	\]
\end{Lem}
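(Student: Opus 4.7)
The plan is to reduce the uniform (over all $x \in \M^*$) lower bound to a union bound over an $h$-covering of $\M^*$ and then apply a blockwise concentration inequality that exploits the lower-bound half of \eqref{a3}. First, I would choose a maximal $h$-packing $\{x_1, \dots, x_N\} \subset \M^*$. Because the Euclidean balls $\B(x_i, h/2)$ are pairwise disjoint, and $h \leq 2\varkappa$ together with the reach bound allows a standard volume-comparison argument on a $d$-dimensional manifold of reach at least $\varkappa$, one gets $N \leq 2^d V/h^d$. By maximality, every $x \in \M^*$ lies within Euclidean distance $h$ of some $x_i$, so $\B(x_i, h) \subseteq \B(x, 2h)$ and hence
\[
\sum_{t=1}^T \1(X_t \in \B(x, 2h)) \geq \sum_{t=1}^T \1(X_t \in \B(x_i, h)).
\]
It therefore suffices to control the count in $\B(x_i, h)$ for each of the $N$ centres and then apply a union bound at the end.

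For the second step I would fix $i$, set $n = \lfloor T/k \rfloor$, and partition the first $nk$ indices into $n$ consecutive blocks of length $k$. Setting $Y_\ell = \sum_{j=1}^k \1(X_{(\ell-1)k+j} \in \B(x_i, h))$, the lower-bound half of \eqref{a3} yields $\E[Y_\ell \mid \F_{(\ell-1)k}] \geq k p_0 h^d$, while trivially $Y_\ell \in [0,k]$ and $\E[Y_\ell^2 \mid \F_{(\ell-1)k}] \leq k\,\E[Y_\ell \mid \F_{(\ell-1)k}] \leq k^2 p_1 h^d$ from the upper-bound half. A Bernstein-type inequality for adapted sequences with bounded increments and bounded conditional variance (equivalently, a multiplicative Chernoff bound applied to $Y_\ell/k \in [0,1]$ with relative deviation $\delta = 1 - 1/k$) then gives
\[
\p\Big(\sum_{\ell=1}^n Y_\ell < p_0 h^d n\Big) \leq \exp\Big(-\tfrac{p_0 h^d n}{12}\Big),
\]
and since $\sum_{\ell=1}^n Y_\ell$ is a lower bound for $\sum_{t=1}^T \1(X_t \in \B(x_i, h))$, the same bound holds for the latter. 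Union-bounding over the $N \leq 2^d V/h^d$ centres produces the stated probability bound.

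The main obstacle I anticipate is the concentration step. The target threshold $p_0 h^d n$ is only a $1/k$-fraction of the conditional mean lower bound $k p_0 h^d n$, so one is working at relative deviation $1 - 1/k$; for $k \geq 2$ this gives a clean lower-tail estimate and the constant $1/12$ comes out with a little care in bookkeeping, using both halves of \eqref{a3} and the elementary bound $Y_\ell^2 \leq k Y_\ell$. The covering step is routine but does rely on having a clean manifold-volume comparison for balls of radius $h/2 \leq \varkappa$, which is exactly where the hypothesis $h \leq 2\varkappa$ is used.
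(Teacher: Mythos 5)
Your overall architecture --- maximal $h$-packing of $\M^*$ with $N\leq 2^dV/h^d$ centres, the inclusion $\B(x_i,h)\subseteq\B(x,2h)$, a blockwise martingale concentration bound per centre, and a union bound --- is exactly the paper's strategy (the paper phrases the first step as a minimal $h$-net and invokes Lemma 2.5 of \cite{blw18} for the volume comparison, but this is the same step). The one genuine difference is the concentration step. The paper does not work with the full block count $Y_\ell=\sum_{j=1}^k\1(X_{(\ell-1)k+j}\in\B(x_i,h))\in[0,k]$; it instead uses the block \emph{indicator} $\xi_\ell=\1(\exists\, j\leq k:\ X_{(\ell-1)k+j}\in\B(x_i,h))\in\{0,1\}$, notes that $\E(\xi_\ell\cond\F_{(\ell-1)k})\geq\max_j\p(\cdot)\geq\frac1k\sum_j\p(\cdot)\geq p_0h^d$ by \eqref{a3}, and then applies Freedman's inequality to the deviation of $\sum_\ell\xi_\ell$ down to \emph{half} its conditional mean, i.e.\ to the threshold $\tfrac{p_0h^d}{2}\lfloor T/k\rfloor$. (The lemma as displayed states the threshold without the factor $\tfrac12$; the paper's own proof, and Lemma \ref{lem6} it relies on, only deliver the $\tfrac12$ version, and only that version is used downstream.)

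Two concrete issues with your variant. First, your route reaches the threshold $p_0h^d\lfloor T/k\rfloor$ only because it is a $1/k$-fraction of the conditional mean lower bound $kp_0h^d\lfloor T/k\rfloor$; for $k=1$ (which \eqref{a3} permits) the relative deviation $\delta=1-1/k$ vanishes and your argument gives nothing --- indeed no concentration argument can push the count below a threshold equal to its conditional mean lower bound, which is precisely why the factor $\tfrac12$ is needed. The paper's indicator-based argument is uniform in $k$ for the $\tfrac12$-threshold. Second, the two tools you present as ``equivalent'' are not: the Bernstein bound with conditional variance proxy $\E(Y_\ell^2\cond\F)\leq k^2p_1h^d$ yields an exponent of order $(k-1)^2p_0^2h^dn/(k^2p_1+\cdots)$, which degrades as $p_1/p_0$ grows and does not reproduce the stated $p_0h^dn/12$; you must commit to the multiplicative Chernoff/MGF route for $[0,1]$-valued adapted variables (via $\E(e^{-\lambda W_\ell}\cond\F)\leq e^{-p_0h^d(1-e^{-\lambda})}$), which for $k\geq2$ gives $\delta\geq\tfrac12$ and exponent at least $p_0h^dn/8$. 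With that choice made and the $k\geq2$ (or factor-$\tfrac12$) caveat addressed, your proof is sound.
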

We will choose $h \gtrsim (k\log T / T)^{1/d}$ with a sufficiently large hidden constant, so 
we assume that $\frac{2^d V}{h^d} e^{-\frac{p_0h^d}{12} \lfloor T/k\rfloor} < 1/T$.
Due to Lemma \ref{lem7}, with probability at least $1 - 1/T$, for each $u \in \{u_1, \dots, 
u_d\}$ there are $\gtrsim p_0 h^d \lfloor T/k \rfloor$ points amongst $\{X_1, \dots, X_T\} 
\cap A_j$ such that
\[
	\|(\bi - \bpi_j^\M) \bpi_j^* u \|^2 h^2 \lesssim \|(\bi - \bpi_j^\M) \bpi_j^* (X_t - Z_j)\|^2, 
	\quad \forall \, u \in \{u_1, \dots, u_d\}.
\]
Thus,
\[
	p_0 h^d \left\lfloor \frac Tk \right\rfloor \|(\bi - \bpi_j^\M) \bpi_j^*\|^2 h^2
	\lesssim \sum\limits_{X_t \in A_j} d^2(X_t, \M).
\]
Since, according to Lemma \ref{lem7}, $A_j$ contains at most $3 p_1 (2h)^d T / 2$ 
points, we have
\begin{align*}
	&
	\sum\limits_{X_t \in A_j} \|\widetilde\bpi_t^* - \widetilde\bpi_t^\M\|^2
	\lesssim \sum\limits_{X_t \in A_j} \|\widetilde\bpi_t^* - \widetilde\bpi_j^\M\|^2 + 
	\frac{h^2}{\varkappa^2}
	\\&
	\lesssim \sum\limits_{X_t \in A_j} \|\bpi_j^* - \widetilde\bpi_j^\M\|^2 + 
	\frac{h^2}{\varkappa^2}
	\lesssim \frac{p_0}{k p_1} \sum\limits_{X_t \in A_j} d^2(X_t, \M).
\end{align*}
Plugging this bound into \eqref{th2_3} and \eqref{th2_4}, we obtain that
\begin{align*}
	d^2(X_t, \M)
	&
	\lesssim \sum\limits_{t=1}^T \frac{p_0}{k p_1} \cdot \frac{\|\eps_t^\parallel\|^2 
	d^2(X_t, \M)}{h^2} 
	+ \sum\limits_{t=1}^T \left( d^2(Y_t, \M) - d^2(Y_t, \M^*) \right)
	\\&
	+ \sum\limits_{t=1}^T a_{\M, t}^T \eps_t^\perp
	+ \sum\limits_{t=1}^T  \left( \frac{\|\eps_t^\parallel\|^4}{h\varkappa^2} + 
	R^2 \, \1\left( \|\eps_t^\parallel\| \geq \varkappa \right) +  h\|\eps_t^\perp\|^2 + 
	\frac{h^4}{\varkappa^2} \right)
\end{align*}
on an event with probability at least $1 - 3/T$.
Plug the ERM $\widehat\M$ into the last expression:
\begin{align*}
	\sum\limits_{t=1}^T d^2(X_t, \widehat\M)
	&
	\lesssim \sum\limits_{t=1}^T \frac{p_0}{k p_1} \frac{\|\eps_t^\parallel\|^2 d^2(X_t, 
	\widehat\M)}{h^2} + \sup\limits_{\M\in\mclass_\varkappa^d} \sum\limits_{t=1}^T 
	a_{\M, t}^T \eps_t^\perp
	\\&
	+ \sum\limits_{t=1}^T  \left( \frac{\|\eps_t^\parallel\|^4}{h\varkappa^2} + 
	R^2 \, \1\left( \|\eps_t^\parallel\| \geq \varkappa \right) +  h\|\eps_t^\perp\|^2 + 
	\frac{h^4}{\varkappa^2} \right).
\end{align*}

The following large deviation inequalities will be useful.
\begin{Lem}
	\label{lem_conc}
	Let $\delta \in (0, 1)$.
	The following inequalities hold with probability at least $1 - \delta$:
	\begin{enumerate}
		\item[1)] $\max\limits_{1 \leq t \leq T} \|\eps_t^\parallel\| \leq 4\sigma_{\max} 
		\sqrt{d} + 2\sigma_{\max}\sqrt{2 \log(T/\delta)}$, where $\sigma_{\max} 
		=\max\limits_{1 \leq t \leq T} \sigma_t$.
		\item[2)] $\sum\limits_{t=1}^T \|\eps_t^\perp\| \leq 4\sqrt{(D-d)T 
		\sum\limits_{t=1}^T \sigma_t^2} + 2\sqrt{2\log(1/\delta)\sum\limits_{t=1}^T 
		\sigma_t^2}$.
		\item[3)] $\frac1T \sum\limits_{t=1}^T \|\eps_t^\parallel\|^2 \leq \frac4T 
		\sum\limits_{t=1}^T \sigma_t^2 + 64d \sigma_{\max}^2 + \frac{32}T 
		\sqrt{\sum\limits_{t=1}^T \sigma_t^4}\left( \log\frac1\delta 
		\vee \sqrt{\log\frac1\delta} \right)$.
		\item[4)] $\frac1T \sum\limits_{t=1}^T \|\eps_t^\perp\|^2 \leq \frac4T 
		\sum\limits_{t=1}^T \sigma_t^2 + 64(D-d) \sigma_{\max}^2 + \frac{32}T 
		\sqrt{\sum\limits_{t=1}^T \sigma_t^4}\left( \log\frac1\delta 
		\vee \sqrt{\log\frac1\delta} \right)$.
		\item[5)] $\frac1T \sum\limits_{t=1}^T \1\left( \|\eps_t^\parallel\| \geq c\varkappa 
		\right) < \frac{2 \cdot 6^d}T \sum\limits_{t=1}^T 
		e^{-\frac{c^2\varkappa^2}{4\sigma_t^2}} + \frac{2\log(1/\delta)}{T}$.
	\end{enumerate}
\end{Lem}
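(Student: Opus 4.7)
All five inequalities rest on standard sub-Gaussian and sub-exponential concentration applied to the independent noise vectors $\eps_1, \dots, \eps_T$. The assumption of Theorem \ref{th2} that the tangent and normal components of $\eps_t$ are independent, together with \eqref{a4}, implies that both $\eps_t^\parallel \in \T_{X_t}\M^*$ and $\eps_t^\perp \in \T_{X_t}^\perp\M^*$ remain sub-Gaussian with parameter $\sigma_t^2$ conditionally on $X_t$. The key preliminary device is an $\eps$-net argument on the unit sphere: a $(1/2)$-net $\N_{1/2}$ in $\R^m$ has cardinality at most $6^m$ and satisfies $\|\xi\| \leq 2\max_{u \in \N_{1/2}} u^T \xi$, so combining the scalar sub-Gaussian tail $\p(u^T \xi \geq r) \leq e^{-r^2/(2\sigma^2)}$ with a union bound yields
\[
    \p\bigl( \|\xi\| \geq r \bigr) \leq 6^m \exp\bigl( -r^2/(8\sigma^2) \bigr),
\]
which after inversion reads $\|\xi\| \leq 4\sigma\sqrt{m} + 2\sigma\sqrt{2\log(1/\delta)}$ with probability at least $1-\delta$.

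Part (1) follows by applying this tail bound to each $\eps_t^\parallel \in \R^d$ at confidence $\delta/T$ and union-bounding over $t$. For part (5), the same tail yields $p_t := \p(\|\eps_t^\parallel\| \geq c\varkappa) \leq 6^d e^{-c^2\varkappa^2/(4\sigma_t^2)}$ (tracking constants in the net argument); since the indicators $\1(\|\eps_t^\parallel\| \geq c\varkappa)$ are independent Bernoullis, a standard Bernstein/Chernoff bound for their sum gives $\sum_t \1(\|\eps_t^\parallel\| \geq c\varkappa) \leq 2\sum_t p_t + 2\log(1/\delta)$ with probability $\geq 1-\delta$, which is the stated inequality. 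For parts (3) and (4), the squared norms $\|\eps_t^\parallel\|^2$ and $\|\eps_t^\perp\|^2$ are sub-exponential with Orlicz parameter of order $\sigma_t^2$ and means bounded by $d\sigma_t^2$ and $(D-d)\sigma_t^2$ respectively. Bernstein's inequality for sums of independent centered sub-exponentials produces a fluctuation of order $\sqrt{\log(1/\delta)\sum_t \sigma_t^4} \vee \sigma_{\max}^2 \log(1/\delta)$, which after dividing by $T$ and absorbing the expectations into the $4T^{-1}\sum_t\sigma_t^2 + 64 d\sigma_{\max}^2$ (respectively $64(D-d)\sigma_{\max}^2$) pieces yields the displayed bounds.

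For part (2), the map $\xi \mapsto \|\xi\|$ is $1$-Lipschitz, so Lipschitz concentration for sub-Gaussian vectors makes $\|\eps_t^\perp\| - \E\|\eps_t^\perp\|$ sub-Gaussian with parameter $\sigma_t^2$; by independence, $\sum_t (\|\eps_t^\perp\| - \E\|\eps_t^\perp\|)$ is sub-Gaussian with parameter $\sum_t \sigma_t^2$ and hence bounded by $\sqrt{2\log(1/\delta)\sum_t\sigma_t^2}$ with probability $\geq 1-\delta$. Combined with $\sum_t \E\|\eps_t^\perp\| \leq \sum_t \sigma_t\sqrt{D-d} \leq \sqrt{(D-d)T\sum_t\sigma_t^2}$ by Jensen and Cauchy--Schwarz, this gives the second inequality up to an absolute constant. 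The main technical subtlety lies here: promoting the marginal sub-Gaussianity \eqref{a4} to a Lipschitz concentration of $\|\eps_t^\perp\|$ around its mean is not automatic. It can be obtained either via a Talagrand-type convex concentration argument, or in a self-contained way by invoking the $\eps$-net tail above and converting median bounds to mean bounds. Once this step is granted, everything else is bookkeeping of expectations and Bernstein constants.
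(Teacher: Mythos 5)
Parts (1), (3), (4) and (5) of your argument track the paper's proof essentially step for step: an individual $\eps$-net tail bound plus a union bound over $t$ for (1); reduction via a product of $1/2$-nets to sums of independent sub-exponential variables $(u_t^T\eps_t^\parallel)^2$ followed by Bernstein for (3) and (4); and the net tail $\p(\|\eps_t^\parallel\|\geq c\varkappa)\leq 6^d e^{-c^2\varkappa^2/(4\sigma_t^2)}$ combined with Bernoulli Bernstein for (5). The bookkeeping there is fine. The genuine gap is in part (2). You assert that $\|\eps_t^\perp\|-\E\|\eps_t^\perp\|$ is sub-Gaussian with parameter $\sigma_t^2$ by ``Lipschitz concentration for sub-Gaussian vectors,'' but assumption \eqref{a4} only controls the one-dimensional marginals $u^T\eps_t$; dimension-free concentration of the $1$-Lipschitz functional $\xi\mapsto\|\xi\|$ around its mean does \emph{not} follow from marginal sub-Gaussianity (it holds for Gaussian vectors, for independent bounded coordinates via Talagrand, for strongly log-concave laws, none of which is assumed here). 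You flag this yourself, but neither escape route you propose closes it: a Talagrand-type convex-concentration argument needs structure the model does not provide, and converting the individual net tails on each $\|\eps_t^\perp\|$ into a bound on the sum by a union bound over $t$ yields a fluctuation of order $\sum_t\sigma_t\sqrt{\log(T/\delta)}$, which exceeds the claimed $2\sqrt{2\log(1/\delta)\sum_t\sigma_t^2}$ term by a factor of roughly $\sqrt{T}$ when the $\sigma_t$ are comparable.

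The paper sidesteps the issue entirely. It writes $\sum_{t=1}^T\|\eps_t^\perp\|=\max\bigl\{\sum_{t=1}^T u_t^T\eps_t^\perp : u_t\in\B(0,1)\cap\T_{X_t}^\perp\M^*\bigr\}\leq 2\max\sum_{t=1}^T u_t^T\eps_t^\perp$, where the latter maximum runs over the product of $1/2$-nets $\N_{1/2}^1\times\dots\times\N_{1/2}^T$, of cardinality at most $6^{(D-d)T}$. For each fixed $(u_1,\dots,u_T)$ the sum is a sum of independent sub-Gaussian variables, hence sub-Gaussian with variance proxy $\sum_t\sigma_t^2$, and Hoeffding plus the union bound over the $6^{(D-d)T}$ net points gives $\p(\sum_t\|\eps_t^\perp\|\geq u)\leq 6^{(D-d)T}\exp\{-u^2/(8\sum_t\sigma_t^2)\}$; inverting, the $(D-d)T\log 6$ term becomes the $4\sqrt{(D-d)T\sum_t\sigma_t^2}$ piece and the confidence level produces $2\sqrt{2\log(1/\delta)\sum_t\sigma_t^2}$. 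Replacing your Lipschitz step with this product-net argument makes part (2) complete; the rest of your proof stands.
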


Choose $h \asymp  \sigma_{\max} \sqrt{\frac{kp_1}{p_0}} (\sqrt{d} + \sqrt{ \log T}) \vee 
(\log T / T)^{1/d}$.
Then it holds that
\begin{align*}
	\frac1T \sum\limits_{t=1}^T d^2(X_t, \widehat\M)
	&
	\lesssim \frac1T \sup\limits_{\M\in\mclass_\varkappa^d} \sum\limits_{t=1}^T a_{\M, 
	t}^T \eps_t^\perp
	+ \frac{D \left(\sigma_{\max} \sqrt{\log T} \vee (\log T / T)^{1/d} \right)}T 
	\sum\limits_{t=1}^T \sigma_t^2
	\\&
	+ \frac{\left( \sigma_{\max}^4 \log^2 T \vee (\log T / T)^{4/d} \right)}{\varkappa^2}
\end{align*}
with probability at least $1 - 7/T$.

It remains to bound the first term in the right hand side.
Note that, due to the conditions of the theorem, for any fixed $\M \in 
\mclass_\varkappa^d$, the vectors $\eps_1^\perp, \dots, \eps_T^\perp$ are independent 
of $a_{\M, 1}, \dots, a_{\M, T}$.
Then $\left( \sum\limits_{t=1}^T a_{\M, t}^T \eps_t^\perp \cond X_1, \eps_1^\parallel, \dots, 
X_T, \eps_T^\parallel \right)$ is a sub-Gaussian process.
Moreover, for any $\M, \M' \in \mclass_\varkappa^d$, $\left( \sum\limits_{t=1}^T a_{\M, 
t}^T \eps_t^\perp \cond X_1, \eps_1^\parallel, \dots, X_T, \eps_T^\parallel \right)$ is a 
sub-Gaussian random variable with variance proxy
\[
	\sum\limits_{t=1}^T \|a_{\M, t} - a_{\M', t}\|^2 \sigma_t^2
	\leq \sum\limits_{t=1}^T d_H^2(\M, \M') \sigma_t^2
	\leq \sum\limits_{t=1}^T 4R^2\sigma_t^2.
\]
Here we used the fact that
\[
	\|a_{\M, t} - a_{\M', t}\|
	= \|\proj\M{X_t + \eps_t^\parallel} - \proj{\M'}{X_t + \eps_t^\parallel}\|
	\leq d_H(\M, \M').
\]
This also yields
\begin{align*}
	&
	\E \sup\limits_{\M, \M'\in\mclass_\varkappa^d, d_H(\M, \M') \leq \gamma} 
	\sum\limits_{t=1}^T (a_{\M, t} - a_{\M', t})^T 
	\eps_t^\perp
	\leq \E \sup\limits_{\M, \M'\in\mclass_\varkappa^d, d_H(\M, \M') \leq \gamma} 
	\sum\limits_{t=1}^T \|a_{\M, t} - a_{\M', t}\| \|\eps_t^\perp\|
	\\&
	\leq \E \sup\limits_{\M, \M'\in\mclass_\varkappa^d, d_H(\M, \M') \leq \gamma} 
	\sum\limits_{t=1}^T d_H(\M, \M') \|\eps_t^\perp\|
	\leq \E \gamma\sum\limits_{t=1}^T \|\eps_t^\perp\|
	\lesssim \gamma \sum\limits_{t=1}^T \sigma_t \sqrt{D-d}.
\end{align*}
Using the chaining technique, we obtain
\begin{align*}
	\E \sup\limits_{\M\in\mclass_\varkappa^d} \sum\limits_{t=1}^T a_{\M, t}^T 
	\eps_t^\perp
	\lesssim \gamma \sum\limits_{t=1}^T \sigma_t \sqrt{D-d} + R 
	\sqrt{\sum\limits_{t=1}^T \sigma_t^2} \int\limits_\gamma^{2R} 
	\sqrt{\log \N(\mclass_\varkappa^d, d_H, \eps)} d\eps .
\end{align*}
Theorem 9 in \cite{gppvw12a} claims that
\[
	\N(\mclass_\varkappa^d, d_H, u)
	\leq c_1 \binom{D}{d}^{(c_2/\varkappa)^D} \exp\left\{ 2^{d/2}(D - d) (c_2 / 
	\varkappa)^D u^{-d/2} \right\},
\]
where the constant $c_2$ depends only on $\varkappa$ and $d$.
This yields
\begin{align*}
	\E \sup\limits_{\M\in\mclass_\varkappa^d} \sum\limits_{t=1}^T a_{\M, t}^T \eps_t^\perp
	&
	\lesssim \gamma \sum\limits_{t=1}^T \sigma_t \sqrt{D-d} + \frac{ 2^{d/4} R 
	\sqrt{D-d}}{\varkappa^{D/2}} \sqrt{\sum\limits_{t=1}^T \sigma_t^2} 
	\int\limits_{\gamma}^{2R} u^{-d/4} du
	\\&
	\lesssim \gamma \sqrt{(D-d)T} \sqrt{\sum\limits_{t=1}^T \sigma_t^2} + \frac{ 2^{d/4} 
	R \sqrt{D-d}}{\varkappa^{D/2}} \sqrt{\sum\limits_{t=1}^T \sigma_t^2} 
	\int\limits_{\gamma}^{2R} u^{-d/4} du.
\end{align*}
Choosing
\[
	\gamma =
	\begin{cases}
		0, \quad d < 4,\\
		\frac{\log T}{\sqrt T}, \quad d = 4,\\
		T^{-2/d}, \quad d > 4,
	\end{cases}
\]
we obtain
\begin{equation*}
	\frac1T \E \sup\limits_{\M\in\mclass_\varkappa^d} \sum\limits_{t=1}^T a_{\M, t}^T 
	\eps_t^\perp
	\lesssim
	\begin{cases}
		\frac 1T \sqrt{(D - d)\sum\limits_{t=1}^T \sigma_t^2}, \quad d < 4,\\
		\frac{\log T}T \sqrt{(D - d)\sum\limits_{t=1}^T \sigma_t^2}, \quad d = 4,\\
		T^{-2/d}\sqrt{\frac{(D - d)\sum\limits_{t=1}^T \sigma_t^2}T}, \quad d > 4.
	\end{cases}
\end{equation*}
Finally, Azuma-Hoeffding inequality yields that
\[
	\sup\limits_{\M\in\mclass_\varkappa^d} \sum\limits_{t=1}^T a_{\M, t}^T \eps_t^\perp
	- \E \sup\limits_{\M\in\mclass_\varkappa^d} \sum\limits_{t=1}^T a_{\M, t}^T 
	\eps_t^\perp
	\lesssim R\sqrt{(D-d) \sum\limits_{t=1}^T \sigma_t^2 \log(T)}
\]
with probability at least $1 - 1/T$.
Thus, with probability at least $1 - 8/T$, it holds that
\[
	\frac1T \sum\limits_{t=1}^T d^2(X_t, \widehat\M)
	\lesssim \psi_T
	+ \frac{D\left(\sigma_{\max} \sqrt{\log T} \vee (\log T / T)^{1/d} \right)}T 
	\sum\limits_{t=1}^T \sigma_t^2
	+ \frac{\left( \sigma_{\max}^4 \log^2 T \vee (\log T / T)^{4/d} \right)}{\varkappa^2},
\]
where
\[
	\psi_T =
	\begin{cases}
		\frac 1T \sqrt{\sum\limits_{t=1}^T \sigma_t^2}, \quad d < 4,\\
		\frac{\log T}T \sqrt{\sum\limits_{t=1}^T \sigma_t^2}, \quad d = 4,\\
		T^{-2/d}\sqrt{\frac{\sum\limits_{t=1}^T \sigma_t^2}T}, \quad d > 4.
	\end{cases}
\]
	
	\bibliographystyle{plain}

\begin{thebibliography}{10}

\bibitem{al19}
Eddie Aamari and Cl\'{e}ment Levrard.
\newblock Nonasymptotic rates for manifold, tangent space and curvature
  estimation.
\newblock {\em Ann. Statist.}, 47(1):177--204, 2019.

\bibitem{asw20}
Robert Adamek, Stephan Smeekes, and Ines Wilms.
\newblock Lasso inference for high-dimensional time series, 2020.

\bibitem{de2007bayesian}
Enrique Alba and Manuel Mendoza.
\newblock Bayesian forecasting methods for short time series.
\newblock {\em Foresight: The International Journal of Applied Forecasting},
  pages 41--44, 01 2007.

\bibitem{azadeh2007forecasting}
Ali Azadeh, SF~Ghaderi, and S~Sohrabkhani.
\newblock Forecasting electrical consumption by integration of neural network,
  time series and anova.
\newblock {\em Applied Mathematics and Computation}, 186(2):1753--1761, 2007.

\bibitem{belkin2002laplacian}
Mikhail Belkin and Partha Niyogi.
\newblock Laplacian eigenmaps and spectral techniques for embedding and
  clustering.
\newblock In {\em Advances in neural information processing systems}, pages
  585--591, 2002.

\bibitem{bml06}
Y.~Bengio, Martin Monperrus, and Hugo Larochelle.
\newblock Nonlocal estimation of manifold structure.
\newblock {\em Neural computation}, 18:2509--28, 11 2006.

\bibitem{blw18}
Jean-Daniel Boissonnat, Andr\'{e} Lieutier, and Mathijs Wintraecken.
\newblock The reach, metric distortion, geodesic convexity and the variation of
  tangent spaces.
\newblock In {\em 34th {I}nternational {S}ymposium on {C}omputational
  {G}eometry}, volume~99 of {\em LIPIcs. Leibniz Int. Proc. Inform.}, pages
  Art. No. 10, 14. Schloss Dagstuhl. Leibniz-Zent. Inform., Wadern, 2018.

\bibitem{bollerslev1986generalized}
Tim Bollerslev.
\newblock Generalized autoregressive conditional heteroskedasticity.
\newblock {\em Journal of econometrics}, 31(3):307--327, 1986.

\bibitem{box2015time}
George~E.P. Box.
\newblock {\em Time Series Analysis: Forecasting and Control}.
\newblock Wiley Series in Probability and Statistics. Wiley, 2015.

\bibitem{cw18}
Likai Chen and Wei~Biao Wu.
\newblock Concentration inequalities for empirical processes of linear time
  series.
\newblock {\em Journal of Machine Learning Research}, 18(231):1--46, 2018.

\bibitem{c05}
R.~R. Coifman, S.~Lafon, A.~B. Lee, M.~Maggioni, B.~Nadler, F.~Warner, and
  S.~W. Zucker.
\newblock Geometric diffusions as a tool for harmonic analysis and structure
  definition of data: Diffusion maps.
\newblock {\em Proceedings of the National Academy of Sciences},
  102(21):7426--7431, 2005.

\bibitem{donoho2003hessian}
David~L Donoho and Carrie Grimes.
\newblock Hessian eigenmaps: Locally linear embedding techniques for
  high-dimensional data.
\newblock {\em Proceedings of the National Academy of Sciences},
  100(10):5591--5596, 2003.

\bibitem{dmps18}
R~Douc, E~Moulines, P~Priouret, and P~Soulier.
\newblock {\em {M}arkov Chains}.
\newblock Springer New York, 2018.

\bibitem{f59}
Herbert Federer.
\newblock Curvature measures.
\newblock {\em Trans. Amer. Math. Soc.}, 93:418--491, 1959.

\bibitem{fmn16}
Charles Fefferman, Sanjoy Mitter, and Hariharan Narayanan.
\newblock Testing the manifold hypothesis.
\newblock {\em J. Amer. Math. Soc.}, 29(4):983--1049, 2016.

\bibitem{f75}
David~A. Freedman.
\newblock On tail probabilities for martingales.
\newblock {\em Ann. Probab.}, 3(1):100--118, 02 1975.

\bibitem{gppvw12b}
Christopher~R. Genovese, Marco Perone-Pacifico, Isabella Verdinelli, and Larry
  Wasserman.
\newblock Manifold estimation and singular deconvolution under {H}ausdorff
  loss.
\newblock {\em Ann. Statist.}, 40(2):941--963, 2012.

\bibitem{gppvw12a}
Christopher~R. Genovese, Marco Perone-Pacifico, Isabella Verdinelli, and Larry
  Wasserman.
\newblock Minimax manifold estimation.
\newblock {\em J. Mach. Learn. Res.}, 13:1263--1291, 2012.

\bibitem{ghazali2007higher}
Rozaida Ghazali.
\newblock {\em Higher order neural networks for financial time series
  prediction}.
\newblock PhD thesis, Liverpool John Moores University, 2007.

\bibitem{ggh20}
Faheem Gilani, Dimitrios Giannakis, and John Harlim.
\newblock Kernel-based prediction of non-markovian time series, 2020.

\bibitem{gz84}
Evarist Gine and Joel Zinn.
\newblock Some limit theorems for empirical processes.
\newblock {\em Ann. Probab.}, 12(4):929--998, 1984.
\newblock With discussion.

\bibitem{go09}
Tom Goldstein and Stanley Osher.
\newblock The split bregman method for l1-regularized problems.
\newblock {\em SIAM journal on imaging sciences}, 2(2):323--343, 2009.

\bibitem{granger1980introduction}
Clive~WJ Granger and Roselyne Joyeux.
\newblock An introduction to long-memory time series models and fractional
  differencing.
\newblock {\em Journal of time series analysis}, 1(1):15--29, 1980.

\bibitem{hein2007manifold}
Matthias Hein and Markus Maier.
\newblock Manifold denoising.
\newblock In {\em Advances in neural information processing systems}, pages
  561--568, 2007.

\bibitem{lstm}
Sepp Hochreiter and J\"{u}rgen Schmidhuber.
\newblock Long short-term memory.
\newblock {\em Neural Comput.}, 9(8):1735–1780, November 1997.

\bibitem{hjs01b}
Marian Hristache, Anatoli Juditsky, J\"{o}rg Polzehl, and Vladimir Spokoiny.
\newblock Structure adaptive approach for dimension reduction.
\newblock {\em Ann. Statist.}, 29(6):1537--1566, 2001.

\bibitem{km14}
Vitaly Kuznetsov and Mehryar Mohri.
\newblock Generalization bounds for time series prediction with non-stationary
  processes.
\newblock In Peter Auer, Alexander Clark, Thomas Zeugmann, and Sandra Zilles,
  editors, {\em Algorithmic Learning Theory}, pages 260--274, Cham, 2014.
  Springer International Publishing.

\bibitem{lyb11}
Clifford Lam, Qiwei Yao, and Neil Bathia.
\newblock Estimation of latent factors for high-dimensional time series.
\newblock {\em Biometrika}, 98(4):901--918, 2011.

\bibitem{lz20}
Bryan Lim and Stefan Zohren.
\newblock Time series forecasting with deep learning: A survey, 2020.

\bibitem{llyal06}
Ruei-Sung Lin, Che-Bin Liu, Ming-Hsuan Yang, Narendra Ahuja, and Stephen
  Levinson.
\newblock Learning nonlinear manifolds from time series.
\newblock In Ale{\v{s}} Leonardis, Horst Bischof, and Axel Pinz, editors, {\em
  Computer Vision -- ECCV 2006}, pages 245--256, Berlin, Heidelberg, 2006.
  Springer Berlin Heidelberg.

\bibitem{mfp17}
F.~Mart{\'i}nez, M.~P. Fr{\'i}as, Mar{\'i}a~Dolores P{\'e}rez, and A.~J.
  Rivera.
\newblock A methodology for applying k-nearest neighbor to time series
  forecasting.
\newblock {\em Artificial Intelligence Review}, pages 1--19, 2017.

\bibitem{mdss17}
Daniel~J. McDonald, Cosma~Rohilla Shalizi, and Mark Schervish.
\newblock Nonparametric risk bounds for time-series forecasting.
\newblock {\em Journal of Machine Learning Research}, 18(32):1--40, 2017.

\bibitem{mr09}
Mehryar Mohri and Afshin Rostamizadeh.
\newblock Rademacher complexity bounds for non-i.i.d. processes.
\newblock In D.~Koller, D.~Schuurmans, Y.~Bengio, and L.~Bottou, editors, {\em
  Advances in Neural Information Processing Systems 21}, pages 1097--1104.
  Curran Associates, Inc., 2009.

\bibitem{moustra2011artificial}
Maria Moustra, Marios Avraamides, and Chris Christodoulou.
\newblock Artificial neural networks for earthquake prediction using time
  series magnitude data or seismic electric signals.
\newblock {\em Expert systems with applications}, 38(12):15032--15039, 2011.

\bibitem{nm10}
Hariharan Narayanan and Sanjoy Mitter.
\newblock Sample complexity of testing the manifold hypothesis.
\newblock In J.~D. Lafferty, C.~K.~I. Williams, J.~Shawe-Taylor, R.~S. Zemel,
  and A.~Culotta, editors, {\em Advances in Neural Information Processing
  Systems 23}, pages 1786--1794. Curran Associates, Inc., 2010.

\bibitem{ldmm}
Stanley Osher, Zuoqiang Shi, and Wei Zhu.
\newblock Low dimensional manifold model for image processing.
\newblock {\em SIAM J. Img. Sci.}, 10:1669--1690, 11 2017.

\bibitem{ormn17}
Colin O’Reilly, Klaus Moessner, and Michele Nati.
\newblock Univariate and multivariate time series manifold learning.
\newblock {\em Knowledge-Based Systems}, 133:1 -- 16, 2017.

\bibitem{psy10}
Jin-Hong Park, T.N. Sriram, and Xiangrong Yin.
\newblock Dimension reduction in time series.
\newblock {\em Statistica Sinica}, 20, 04 2010.

\bibitem{pbtkp20}
Satheesh~K. Perepu, Bala~Shyamala Balaji, Hemanth~Kumar Tanneru, Sudhakar
  Kathari, and Vivek~Shankar Pinnamaraju.
\newblock Reinforcement learning based dynamic weighing of ensemble models for
  time series forecasting, 2020.

\bibitem{ps19}
Nikita Puchkin and Vladimir Spokoiny.
\newblock {Structure-adaptive manifold estimation}.
\newblock working paper or preprint, June 2019.

\bibitem{r15}
Philippe Rigollet.
\newblock High-dimensional statistics.
\newblock {\em Massachusetts Institute of Technology: MIT OpenCourseWare,
  https://ocw.mit.edu. License: Creative Commons BY-NC-SA}, 2015.

\bibitem{rcj18}
P.~L.~C. {Rodrigues}, M.~{Congedo}, and C.~{Jutten}.
\newblock Multivariate time-series analysis via manifold learning.
\newblock In {\em 2018 IEEE Statistical Signal Processing Workshop (SSP)},
  pages 573--577, 2018.

\bibitem{roweis2000nonlinear}
Sam~T Roweis and Lawrence~K Saul.
\newblock Nonlinear dimensionality reduction by locally linear embedding.
\newblock {\em science}, 290(5500):2323--2326, 2000.

\bibitem{shang2014survey}
Han~Lin Shang.
\newblock A survey of functional principal component analysis.
\newblock {\em AStA Advances in Statistical Analysis}, 98(2):121--142, 2014.

\bibitem{ftm18}
Mohamad~Kazem {Shirani Faradonbeh}, Ambuj Tewari, and George Michailidis.
\newblock Finite time identification in unstable linear systems.
\newblock {\em Automatica}, 96:342 -- 353, 2018.

\bibitem{smtjr18}
Max Simchowitz, Horia Mania, Stephen Tu, Michael~I. Jordan, and Benjamin Recht.
\newblock Learning without mixing: Towards a sharp analysis of linear system
  identification.
\newblock In Sébastien Bubeck, Vianney Perchet, and Philippe Rigollet,
  editors, {\em Proceedings of the 31st Conference On Learning Theory},
  volume~75 of {\em Proceedings of Machine Learning Research}, pages 439--473.
  PMLR, 06--09 Jul 2018.

\bibitem{sst10}
Nathan Srebro, Karthik Sridharan, and Ambuj Tewari.
\newblock Smoothness, low noise and fast rates.
\newblock In J.~D. Lafferty, C.~K.~I. Williams, J.~Shawe-Taylor, R.~S. Zemel,
  and A.~Culotta, editors, {\em Advances in Neural Information Processing
  Systems 23}, pages 2199--2207. Curran Associates, Inc., 2010.

\bibitem{sc09}
Ingo Steinwart and Andreas Christmann.
\newblock Fast learning from non-i.i.d. observations.
\newblock In Y.~Bengio, D.~Schuurmans, J.~D. Lafferty, C.~K.~I. Williams, and
  A.~Culotta, editors, {\em Advances in Neural Information Processing Systems
  22}, pages 1768--1776. Curran Associates, Inc., 2009.

\bibitem{sw02}
James~H. Stock and Mark~W. Watson.
\newblock Forecasting using principal components from a large number of
  predictors.
\newblock {\em Journal of the American Statistical Association},
  97(460):1167--1179, 2002.

\bibitem{tmzc15}
R.~{Talmon}, S.~{Mallat}, H.~{Zaveri}, and R.~R. {Coifman}.
\newblock Manifold learning for latent variable inference in dynamical systems.
\newblock {\em IEEE Transactions on Signal Processing}, 63(15):3843--3856,
  2015.

\bibitem{tenenbaum2000global}
Joshua~B Tenenbaum, Vin De~Silva, and John~C Langford.
\newblock A global geometric framework for nonlinear dimensionality reduction.
\newblock {\em science}, 290(5500):2319--2323, 2000.

\bibitem{van2009dimensionality}
Laurens Van Der~Maaten, Eric Postma, and Jaap Van~den Herik.
\newblock Dimensionality reduction: a comparative.
\newblock {\em J Mach Learn Res}, 10(66-71):13, 2009.

\bibitem{viviani2005functional}
Roberto Viviani, Georg Gr{\"o}n, and Manfred Spitzer.
\newblock Functional principal component analysis of fmri data.
\newblock {\em Human brain mapping}, 24(2):109--129, 2005.

\bibitem{w19}
Martin~J. Wainwright.
\newblock {\em High-Dimensional Statistics: A Non-Asymptotic Viewpoint}.
\newblock Cambridge Series in Statistical and Probabilistic Mathematics.
  Cambridge University Press, 2019.

\bibitem{wcp10}
W.~{Wang} and M.~Á. {Carreira-Perpinan}.
\newblock Manifold blurring mean shift algorithms for manifold denoising.
\newblock In {\em 2010 IEEE Computer Society Conference on Computer Vision and
  Pattern Recognition}, pages 1759--1766, June 2010.

\bibitem{10.2307/2332724}
P.~Whittle.
\newblock On stationary processes in the plane.
\newblock {\em Biometrika}, 41(3/4):434--449, 1954.

\bibitem{zls17}
Ningning Zhang, Aijing Lin, and Pengjian Shang.
\newblock Multidimensional k-nearest neighbor model based on eemd for financial
  time series forecasting.
\newblock {\em Physica A: Statistical Mechanics and its Applications}, 477:161
  -- 173, 2017.

\bibitem{zhang2007linear}
Tianhao Zhang, Jie Yang, Deli Zhao, and Xinliang Ge.
\newblock Linear local tangent space alignment and application to face
  recognition.
\newblock {\em Neurocomputing}, 70(7):1547 -- 1553, 2007.
\newblock Advances in Computational Intelligence and Learning.

\bibitem{zc18}
Yao Zheng and Guang Cheng.
\newblock Finite time analysis of vector autoregressive models under linear
  restrictions, 2018.

\end{thebibliography}

	\appendix
	\section{Technical tools}

\begin{Lem}
	\label{lem1}
	Let $\p$ and $\q$ be probability measures on a set $\mathcal X$ and let $f : 
	\mathcal X \rightarrow \R$ be a function such that the second moments $\E_\p f^2$ 
	and $\E_\q f^2$ with respect to 
	$\p$ and $\q$ are finite.
	Then
	\[
	\E_\p f - \E_\q f \leq \sqrt{(\E_\p f^2 + \E_\q f^2) \|\p - \q\|_{TV}}.
	\]
\end{Lem}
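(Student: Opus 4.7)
}
The plan is to reduce the problem to a direct application of the Cauchy--Schwarz inequality after passing to densities with respect to a common dominating measure. Let $\lambda$ be any sigma-finite measure on $\mathcal X$ dominating both $\p$ and $\q$ (for instance $\lambda=\p+\q$), and let $p=d\p/d\lambda$, $q=d\q/d\lambda$. Then
\[
	\E_\p f - \E_\q f = \int_{\mathcal X} f (p-q) \, d\lambda,
\]
and the total variation distance can be written as $\|\p-\q\|_{TV}=\int|p-q|\,d\lambda$ (up to a convention-dependent factor of $1/2$, which can be absorbed into the constant). The goal is then to bound this integral against $(\E_\p f^2+\E_\q f^2)\,\|\p-\q\|_{TV}=\int f^2(p+q)\,d\lambda\cdot\int|p-q|\,d\lambda$.

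The key algebraic step is the splitting
\[
	f(p-q) = \bigl( f\,\mathrm{sign}(p-q)\sqrt{|p-q|}\bigr) \cdot \sqrt{|p-q|},
\]
to which Cauchy--Schwarz applies, yielding
\[
	\int f(p-q)\,d\lambda \leq \sqrt{\int f^2 |p-q|\,d\lambda}\cdot \sqrt{\int|p-q|\,d\lambda}.
\]
The first factor is controlled by the pointwise estimate $|p-q|\leq p+q$, which immediately gives $\int f^2|p-q|\,d\lambda \leq \int f^2(p+q)\,d\lambda=\E_\p f^2+\E_\q f^2$. The second factor is, by definition, the total variation (up to the aforementioned constant). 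Combining the two bounds delivers the claim.

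There is essentially no obstacle of substance in this argument; the only delicate point is bookkeeping around the convention for $\|\cdot\|_{TV}$ (whether it equals $\int|p-q|\,d\lambda$ or half of it), which affects at most a multiplicative $\sqrt 2$ factor and is harmless in all downstream applications of the lemma in the proofs of Lemmata \ref{lem3} and \ref{lem4}. An alternative presentation, which avoids choosing a dominating measure, is to use the Hahn decomposition $\p-\q=\mu^+-\mu^-$ and bound $\int f\,d\mu^\pm$ separately by Cauchy--Schwarz against $\p$ and $\q$, combining the two via $\sqrt{a}+\sqrt{b}\leq\sqrt{2(a+b)}$; this yields the same bound up to the constant.
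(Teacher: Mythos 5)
Your proposal is correct and is essentially the same argument as the paper's: the paper also bounds $|\E_\p f-\E_\q f|\leq\int|f|\,|\dd\p-\dd\q|$, applies Cauchy--Schwarz to split off $\sqrt{\int|\dd\p-\dd\q|}$, and uses $|\dd\p-\dd\q|\leq\dd\p+\dd\q$ on the remaining factor. Your remark about the total-variation normalization is apt — the paper implicitly uses the $L^1$ convention $\|\p-\q\|_{TV}=\int|\dd\p-\dd\q|$, and the discrepancy is indeed only a harmless constant.
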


\begin{proof}
	The claim of Lemma \ref{lem1} follows from the Cauchy-Schwartz inequality
	\begin{align*}
	&
	\left| \E_\p f - \E_\q f \right|
	\leq \int\limits_{\mathcal X} |f(x)| \cdot |\dd\p(x) - \dd\q(x)|
	\\&
	\leq \sqrt{\int\limits_{\mathcal X} f^2(x) \cdot |\dd\p(x) - \dd\q(x)|} 
	\sqrt{\int\limits_{\mathcal X} |\dd\p(x) - \dd\q(x)|}
	\leq  \sqrt{(\E_\p f^2 + \E_\q f^2) \|\p - \q\|_{TV}}.
	\end{align*}
\end{proof}

\begin{Lem}
	\label{lem2}
	Let $\M \in \mclass_\varkappa^d$ and let $\eps_t$ be a sub-Gaussian random 
	vector in $\R^D$ with parameter $\sigma_t^2$.
	Assume that the Markov Chain $\{X_t : 1 \leq t \leq T\}$ on $\M^* \in 
	\mclass_\varkappa^d$ has a stationary 
	measure $\pi$ and denote a distribution of $X_t$ by $\p_t$.
	Let $Y_t = X_t + \eps_t$ and denote the convolutions of $\p_t$ and $\pi$ with the 
	distribution of $\eps_t$ by $\widetilde\p_t$ and $\widetilde\pi$ respectively.
	Then, for any $t \in \{1, \dots, T\}$, we have
	\[
	\left| \E_{\widetilde\p_t} d^2(Y_t, \M) - \E_{\widetilde\pi} d^2(Y_t, \M) \right|
	\leq (128 \sigma_t^2 D + 16R^2) \| \p_t - \pi \|_{TV}^{1/2}.
	\]
\end{Lem}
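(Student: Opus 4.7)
The plan is to apply Lemma \ref{lem1} with $f(y) = d^2(y, \M)$, taking $\p = \widetilde\p_t$ and $\q = \widetilde\pi$ as probability measures on $\R^D$. This immediately reduces the problem to estimating two quantities: the total variation distance $\|\widetilde\p_t - \widetilde\pi\|_{TV}$ and the sum of second moments $\E_{\widetilde\p_t} d^4(Y_t,\M) + \E_{\widetilde\pi} d^4(Y_t,\M)$.

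For the total variation step, I would note that $\widetilde\p_t$ and $\widetilde\pi$ are the convolutions of $\p_t$ and $\pi$ with the common law $\mu_t$ of $\eps_t$, and convolution with a fixed measure is a contraction for the TV norm. Concretely, for any Borel $A$,
\[
|\widetilde\p_t(A) - \widetilde\pi(A)| = \left| \int (\p_t(A-y) - \pi(A-y))\, d\mu_t(y) \right| \leq \int \|\p_t - \pi\|_{TV}\, d\mu_t(y),
\]
so taking the supremum over $A$ gives $\|\widetilde\p_t - \widetilde\pi\|_{TV} \leq \|\p_t - \pi\|_{TV}$.

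For the moment bounds, I would exploit the boundedness of $\M^*$ and $\M$. Since both lie in $\B(0,R)$, for any realization $Y_t = X_t + \eps_t$ with $X_t \in \M^*$ one can fix any $x_0 \in \M$ and write
\[
d(Y_t, \M) \leq \|Y_t - x_0\| \leq \|X_t\| + \|x_0\| + \|\eps_t\| \leq 2R + \|\eps_t\|,
\]
so $d^4(Y_t, \M) \lesssim R^4 + \|\eps_t\|^4$. Because $\eps_t \in \text{SG}(\sigma_t^2)$, each coordinate is scalar sub-Gaussian with parameter $\sigma_t^2$, hence $\E \eps_{t,i}^4 \lesssim \sigma_t^4$, and Cauchy--Schwartz yields $\E \|\eps_t\|^4 \lesssim D^2 \sigma_t^4$. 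This bound is independent of whether $X_t$ is drawn from $\p_t$ or $\pi$, so the same estimate applies to both $\E_{\widetilde\p_t} d^4$ and $\E_{\widetilde\pi} d^4$.

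Putting the two ingredients into Lemma \ref{lem1} gives
\[
\left| \E_{\widetilde\p_t} d^2(Y_t,\M) - \E_{\widetilde\pi} d^2(Y_t,\M) \right| \leq \sqrt{\E_{\widetilde\p_t} d^4 + \E_{\widetilde\pi} d^4}\cdot \sqrt{\|\p_t - \pi\|_{TV}},
\]
and bounding the root of the sum by $16R^2 + 128 D \sigma_t^2$ via $\sqrt{a+b}\leq \sqrt{a}+\sqrt{b}$ concludes the proof. I do not expect any serious obstacle here; the only delicate point is bookkeeping the absolute constants in the sub-Gaussian fourth moment estimate $\E\|\eps_t\|^4 \lesssim D^2 \sigma_t^4$ so that they match the target constants $128$ and $16$ stated in the lemma.
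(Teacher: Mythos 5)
Your proposal is correct and follows essentially the same route as the paper: apply Lemma \ref{lem1} to the convolved measures, use that convolution contracts total variation, and bound the fourth moments via $d(Y_t,\M)\leq 2R+\|\eps_t\|$ together with a sub-Gaussian estimate $\E\|\eps_t\|^4\lesssim D^2\sigma_t^4$ (the paper obtains the latter through a net/tail-integral argument rather than your coordinatewise Cauchy--Schwartz, but this is an immaterial difference and your version in fact yields slightly better constants, comfortably within the stated $128\sigma_t^2D+16R^2$).
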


\begin{proof}
	Denote the convolutions of $\p_t$ and $\pi$ with the distribution of $\eps_t$ by 
	$\widetilde\p_t$ and $\widetilde\pi$ respectively.
	Then, due to Lemma \ref{lem1}, we have
	\[
	\left| \E_{\widetilde\p_t} d^2(Y_t, \M) - \E_{\widetilde\pi} d^2(Y_t, \M) \right|
	\leq \sqrt{\E_{\widetilde\p_t} d^4(Y_t, \M) + \E_{\widetilde\pi} d^4(Y_t, \M)} 
	\sqrt{\| \widetilde\p_t - \widetilde\pi \|_{TV}}.
	\]
	Since the total variation distance between convolutions $\widetilde\p_t$ and 
	$\widetilde\pi $ is not greater than $\|\p_t - \pi\|_{TV}$, it remains to prove
	\[
	\sqrt{\E_{\widetilde\p_t} d^4(Y_t, \M) + \E_{\widetilde\pi} d^4(Y_t, \M)}
	\leq (128 \sigma_t^2 D + 16R^2).
	\]
	
	Using the inequality $(a + b)^4 \leq 8a^4 + 8b^4$, we obtain
	\begin{align}
	\label{lem2_1}
	&
	\E_{\widetilde\p_t} d^4(Y_t, \M)
	= \E_{\widetilde\p_t} \min\limits_{x \in \M} \|X_t + \eps_t - x\|^4
	\\&\notag
	\leq \E_{\widetilde\p_t} \left( \min\limits_{x \in \M} 8\|X_t - x\|^4 + 8\|\eps_t\|^4 
	\right)
	= 8\E_{\p_t} d^4(X_t, \M) + 8\E\|\eps_t\|^4.
	\end{align}
	Note that
	\begin{equation}
	\label{lem2_2}
	\E_{\p_t} d^4(X_t, \M) \leq d_H^4(\M^*, \M) \leq (2R)^4,
	\end{equation}
	where the last inequality follows from the fact that, by definition of 
	$\mclass_\varkappa^d$, $\M$ and $\M^*$ are contained in $\B(0, R)$.
	
	Finally, consider $\E\|\eps_t\|^2$.
	\[
	\E\|\eps_t\|^2
	= \int\limits_{0}^{+\infty} \p\left( \|\eps_t\| > v^{1/4} \right) \dd v
	= \int\limits_{0}^{+\infty} \p\left( \max\limits_{\|u\|=1} u^T\eps_t > v^{1/4} \right) 
	\dd v.
	\]
	From the proof of Theorem 1.19 in \cite{r15}, we have
	\begin{align}
	\label{lem2_3}
	\E\|\eps_t\|^2
	&\notag
	= \int\limits_{0}^{+\infty} \p\left( \max\limits_{\|u\|=1} u^T\eps_t > 
	v^{1/4} \right) \dd v
	\leq \int\limits_{0}^{+\infty} \min\left\{1, 6^D e^{-\frac{\sqrt 
			v}{8\sigma_t^2}}\right\} \dd v
	\\&
	= \left( 8\sigma_t^2 D \log 6 \right)^2 + 2 \int\limits_{8\sigma_t^2 D \log 
		6}^{+\infty} 6^D e^{-\frac{v}{8\sigma_t^2}} t\dd v
	\\&\notag
	= \left( 8\sigma_t^2 D \log 6 \right)^2 + 2 \int\limits_{0}^{+\infty}
	e^{-\frac{v}{8\sigma_t^2}} (v + 8\sigma_t^2 D \log 6) \dd v
	\\&\notag
	= 64 \sigma_t^2 D^2 \log^2 6 + 128\sigma_t^4 D \log 6 + 256 \sigma_t^4
	< 2^{10} \sigma_t^4 D^2.
	\end{align}
	
	The inequalities \eqref{lem2_1}, \eqref{lem2_2} and \eqref{lem2_3} imply
	$\E_{\widetilde\p_t} d^4(Y_t, \M) \leq 2^{13} \sigma_t^4 D^2 + 2^7 R^4$.
	Similarly, one can show that $\E_{\widetilde\pi} d^4(Y_t, \M) \leq 2^{13} \sigma_t^4 
	D^2 + 2^7 R^4$.
	The inequality
	\[
	\sqrt{\E_{\widetilde\p_t} d^4(Y_t, \M) + \E_{\widetilde\pi} d^4(Y_t, \M)}
	\leq \sqrt{2^{14} \sigma_t^4 D^2 + 2^8 R^4}
	\leq 128\sigma_t^2 D + 16 R^2.
	\]
\end{proof}

\begin{Lem}
	\label{lem6}
	Assume that $\{ X_t : 1 \leq t \leq T\} \subset \M^*$ satisfies \eqref{a3}.
	Fix any $x \in \M^*$, $h < h_0$.
	Then
	\[
	\p\left( \sum\limits_{t=1}^T \1(X_t \in \B(x, h)) < \frac{p_0 h^d \lfloor T/k\rfloor}2 
	\right)
	\leq e^{-\frac{p_0h^d}{12} \lfloor T/k\rfloor},
	\]
	and
	\[
	\p\left( \sum\limits_{t=1}^T \1\left( X_t \in \B(x, h) \right) > \frac{3p_1 h^d T}2 
	\right)
	\leq e^{-\frac{p_1h^d}{12} \lceil T/k\rceil},
	\]
	
\end{Lem}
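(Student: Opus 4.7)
The plan is to exploit the martingale small-ball structure of \eqref{a3} together with a Bernstein/Freedman-type concentration inequality for martingale-difference sequences.

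\textbf{Blocking.} Let $N = \lfloor T/k\rfloor$ and partition $\{1,\dots,T\}$ into consecutive blocks of length $k$. For $i\in\{1,\dots,N\}$ set
\[
  S_i \;=\; \sum_{j=1}^k \1\!\left( X_{(i-1)k+j}\in\B(x,h)\right).
\]
Assumption \eqref{a3} applied at $t=(i-1)k$ yields $\E[S_i\mid\F_{(i-1)k}]\in [kp_0 h^d,\, kp_1 h^d]$, so $(S_i-\E[S_i\mid\F_{(i-1)k}])_{i\geq 1}$ is a martingale-difference sequence with respect to $(\F_{ik})_{i\geq 0}$, bounded by $k$ with conditional variance at most $k\,\E[S_i\mid\F_{(i-1)k}]\leq k^2 p_1 h^d$.

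\textbf{Lower bound.} Since $S_i\leq k$, the reverse-Markov inequality gives $\p(S_i\geq 1\mid\F_{(i-1)k})\geq \E[S_i\mid\F_{(i-1)k}]/k\geq p_0 h^d$. Putting $Z_i=\1(S_i\geq 1)$, we have $\sum_{t=1}^T \1(X_t\in\B(x,h))\geq\sum_{i=1}^N Z_i$, and the differences $Z_i-\E[Z_i\mid\F_{(i-1)k}]$ form a bounded martingale-difference sequence with conditional variance bounded by $\E[Z_i\mid\F_{(i-1)k}]$. Applying a Freedman/Bernstein inequality to $\sum_{i=1}^N(\E[Z_i\mid\F_{(i-1)k}]-Z_i)$ at deviation level $Np_0 h^d/2$, and using the lower bound $\E[Z_i\mid\F_{(i-1)k}]\geq p_0 h^d$ on both sides of the Bernstein ratio, produces a tail bound of the form $\exp(-c\,Np_0 h^d)$ with $c$ an absolute constant; choosing the Bernstein form carefully (exploiting the Bernoulli structure of $Z_i$) yields $\exp(-p_0 h^d\lfloor T/k\rfloor/12)$.

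\textbf{Upper bound.} Extend the partition to $N'=\lceil T/k\rceil$ blocks (padding the final block with additional indicator terms covered by \eqref{a3}). Then $\sum_{t=1}^T \1(X_t\in\B(x,h))\leq \sum_{i=1}^{N'} S_i$ and $\E\bigl[\sum_{i=1}^{N'} S_i\bigr]\leq N'kp_1 h^d$. Apply the Bernstein inequality to the martingale differences $S_i-\E[S_i\mid\F_{(i-1)k}]$ (bounded by $k$, total conditional variance at most $N'k^2 p_1 h^d$) with target deviation of order $Tp_1 h^d$; the effective exponent scales like $N' p_1 h^d$, which after constant optimization gives the claimed bound $\exp(-p_1 h^d\lceil T/k\rceil/12)$.

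\textbf{Main obstacle.} The conceptual strategy is standard, but the tricky point is that \eqref{a3} only bounds \emph{block averages} of conditional probabilities, not individual one-step conditional probabilities $\p(X_t\in\B(x,h)\mid\F_{t-1})$. Hence one cannot concentrate indicator by indicator and is forced to work at the block level, which intrinsically costs a factor $k$ in the effective sample size; this is exactly what is reflected in the exponent $p_0 h^d\lfloor T/k\rfloor$. The second delicate point is to recover the explicit constant $1/12$: this requires the Bernstein form tailored to the $\{0,1\}$-valued Bernoulli (resp.\ $[0,k]$-valued) martingale differences rather than a generic sub-Gaussian bound, and relies on the fact that, in the regime $p_0 h^d\ll 1$, the variance term dominates the linear (Bennett) term in the Bernstein denominator.
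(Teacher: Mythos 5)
Your proposal is correct and follows essentially the same route as the paper: partition $\{1,\dots,T\}$ into blocks of length $k$, use \eqref{a3} to get the per-block conditional bounds $p_0h^d\leq\frac1k\E[S_i\mid\F_{(i-1)k}]\leq p_1h^d$, and apply Freedman's martingale Bernstein inequality at the block level; your $Z_i=\1(S_i\geq 1)$ is exactly the paper's $\xi_j$, obtained by the same reverse-Markov step. The only divergence is in the upper tail, where you concentrate the $[0,k]$-valued block sums $S_i$ directly instead of reducing to the indicator $\1(\forall t\text{ in block}:X_t\in\B(x,h))$ as the paper does; your variant is in fact cleaner, since the paper's reduction rests on an event inclusion that, as written, points in the wrong direction ($\1(\forall t)\leq\frac1k\sum_t\1$, not the reverse), whereas your Freedman computation with increment bound $k$ and conditional variance $\leq k^2p_1h^d$ per block delivers the exponent $\frac{3}{28}p_1h^d\lceil T/k\rceil\geq\frac{1}{12}p_1h^d\lceil T/k\rceil$ as claimed.
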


\begin{proof}
It holds that
\begin{align*}
	&
	\p\left( \sum\limits_{t=1}^T \1\left( X_t \in \B(x, h) \right) < \frac{p_0 h^d \lfloor T/k 
	\rfloor}2 \right)
	\leq \p\left( \sum\limits_{t=1}^{k\lfloor T/k\rfloor} \1\left( X_t \in \B(x, h) \right) < 
	\frac{p_0 h^d \lfloor T/k \rfloor}2 \right)
	\\&
	\leq \p\left( \sum\limits_{j=1}^{\lfloor T/k\rfloor} \sum\limits_{t=(j-1)k + 1}^{jk} \1\left( 
	X_t \in \B(x, h) \right) < \frac{p_0 h^d \lfloor T/k \rfloor}2 \right)
	\\&
	\leq \p\left( \sum\limits_{j=1}^{\lfloor T/k\rfloor} \1\left( \exists \, t \in \{(j-1)k + 1, \dots, 
	jk\} :  X_t \in \B(x, h) \right) < \frac{p_0 h^d \lfloor T/k \rfloor}2 \right)
	\\&
	\equiv \p\left( \sum\limits_{j=1}^{\lfloor T/k\rfloor} \xi_j < \frac{p_0 h^d \lfloor T/k 
	\rfloor}2 \right),
\end{align*}
where we introduced $\xi_j = \1\left( \exists \, t \in \{(j-1)k + 1, \dots, jk\} :  X_t \in \B(x, h) 
\right)$, $1 \leq j \leq \lfloor T/k \rfloor$.
Due to \eqref{a3}, we have
\[
	\E\left( \xi_j \cond \F_{(j-1)k} \right)
	\geq \max\limits_{(j-1)k \leq t \leq jk} \p\left( X_t \in \B(x, h) \cond \F_{(j-1)k} \right)
	\geq \frac1k \sum\limits_{(j-1)k \leq t \leq jk} \p\left( X_t \in \B(x, h) \cond \F_{(j-1)k} 
	\right) \geq p_0 h^d.
\]
Applying the martingale Bernstein inequality (see \cite{f75}, (1.6)), we obtain
\[
	\p\left( \sum\limits_{j=1}^{\lfloor T/k \rfloor} \xi_j < \frac{p_0 h^d}2 \left\lfloor\frac 
	Tk\right\rfloor \right)
	\leq \exp\left\{ -\frac{ \lfloor T/k\rfloor (p_0 h^d)^2/8}{p_0 h^d(1 - p_0 h^d) + p_0 h^d 
	/2} \right\}
	\leq e^{-\frac{p_0 h^d}{12} \lfloor T/k\rfloor}.
\]

Similarly,
\begin{align*}
	&
	\p\left( \sum\limits_{t=1}^T \1\left( X_t \in \B(x, h) \right) > \frac{3p_1 h^d T}2 \right)
	\leq \p\left( \sum\limits_{t=1}^{k\lceil T/k\rceil} \1\left( X_t \in \B(x, h) \right) < 
	\frac{3p_1 h^d T}2 \right)
	\\&
	\leq \p\left( \sum\limits_{j=1}^{\lceil T/k\rceil} \sum\limits_{t=(j-1)k + 1}^{jk} \1\left( 
	X_t \in \B(x, h) \right) > \frac{3 p_1 h^d T}2 \right)
	\\&
	\leq \p\left( \sum\limits_{j=1}^{\lceil T/k\rceil} \1\left( \forall \, t \in \{(j-1)k + 1, \dots, 
	jk\} :  X_t \in \B(x, h) \right) > \frac{3 p_1 h^d T}{2k} \right)
	\leq \p\left( \sum\limits_{j=1}^{\lfloor T/k\rfloor} \eta_j > \frac{3p_1 h^d \lceil T/k \rceil
	\rfloor}2 \right),
\end{align*}
where $\eta_j = \1\left( \forall \, t \in \{(j-1)k + 1, \dots, jk\} :  X_t \in \B(x, h) \right)$, $1 \leq 
j \leq \lceil T/k \rceil$.
Condition \eqref{a3} yields
\[
	\E\left( \eta_j \cond \F_{(j-1)k} \right)
	\leq \min\limits_{(j-1)k \leq t \leq jk} \p\left( X_t \in \B(x, h) \cond \F_{(j-1)k} \right)
	\leq \frac1k \sum\limits_{(j-1)k \leq t \leq jk} \p\left( X_t \in \B(x, h) \cond \F_{(j-1)k} 
	\right) \leq p_1 h^d,
\]
and, using the martingale Bernstein inequality we obtain
\[
	\p\left( \sum\limits_{j=1}^{\lceil T/k \rceil} \xi_j > \frac{3p_1 h^d}2 \left\lceil\frac 
	Tk\right\rceil \right)
	\leq \exp\left\{ -\frac{ \lfloor T/k\rfloor (p_1 h^d)^2/8}{p_1 h^d(1 - p_1 h^d) + p_1 h^d 
	/2} \right\}
	\leq e^{-\frac{p_1 h^d}{12} \lceil T/k\rceil}.
\]
\end{proof}
	\section{Proofs related to Theorem \ref{th1}}

\subsection{Proof of Lemma \ref{lem3}}
\label{lem3_proof}

	It holds that
	\begin{align*}
	&
	\E \sup\limits_{\M\in\mclass_\varkappa^d} \left| \frac1T 
	\sum\limits_{t=1}^T \E d^2(Y_t, \M) - \frac1T \sum\limits_{t=1}^T d^2(Y_t, \M) 
	\right|
	\\&
	\leq \E \sup\limits_{\M\in\mclass_\varkappa^d} \left| \frac1T 
	\sum\limits_{t=1}^T \overline\E d^2(Y_t, \M) - \frac1T \sum\limits_{t=1}^T d^2(Y_t, 
	\M) 
	\right|
	\\&
	+ \sup\limits_{\M\in\mclass_\varkappa^d} \left| \frac1T \sum\limits_{t=1}^T \E 
	d^2(Y_t, \M) - \frac1T \sum\limits_{t=1}^T \overline\E d^2(Y_t, \M) \right|.
	\end{align*}
	Due to Lemma \ref{lem2} and the spectral gap condition \eqref{a2}, we 
	have
	\begin{align*}
	&
	\sup\limits_{\M\in\mclass_\varkappa^d} \left| \frac1T \sum\limits_{t=1}^T \E 
	d^2(Y_t, \M) - \frac1T \sum\limits_{t=1}^T \overline\E d^2(Y_t, 
	\M) \right|
	\leq \frac1T \sup\limits_{\M\in\mclass_\varkappa^d}  \sum\limits_{t=1}^T \left| \E 
	d^2(Y_t, \M) -  \overline\E d^2(Y_t, \M) \right|
	\\&
	\leq \sum\limits_{t=1}^T \frac{A(128\sigma_t^2 D + 16 R^2)}{T} (1 - 
	\rho)^{t/2}
	\leq \frac{A(128\sigma_{\max}^2 D + 16R^2)}{T(1 - \sqrt{1 - \rho})},
	\end{align*}
	where $\sigma_{\max} = \max\limits_{1 \leq t \leq T} \sigma_t$.
	Using the inequality $1 - \sqrt{1 - \rho} \geq \rho/2$, we obtain
	\[
		\sup\limits_{\M\in\mclass_\varkappa^d} \left| \frac1T \sum\limits_{t=1}^T \E 
		d^2(Y_t, \M) - \frac1T \sum\limits_{t=1}^T \overline\E d^2(Y_t, 
		\M) \right|
		\leq \frac{2A(128\sigma_{\max}^2 D + 16R^2)}{T\rho}.
	\]
	Similarly,
	\begin{align*}
	&
	\E \sup\limits_{\M\in\mclass_\varkappa^d} \left| \frac1T 
	\sum\limits_{t=1}^T \overline\E d^2(Y_t, \M) - \frac1T \sum\limits_{t=1}^T d^2(Y_t, 
	\M) \right|
	\\&
	- \overline\E \sup\limits_{\M\in\mclass_\varkappa^d} \left| \frac1T 
	\sum\limits_{t=1}^T \E d^2(Y_t, \M) - \frac1T \sum\limits_{t=1}^T d^2(Y_t, \M) 
	\right|
	\\&
	\leq \frac1T \sum\limits_{t=1}^T \E \sup\limits_{\M\in\mclass_\varkappa^d} \left| 
	\overline\E d^2(Y_t, \M) - d^2(Y_t, \M) \right| - \overline\E 
	\sup\limits_{\M\in\mclass_\varkappa^d} \left| \overline\E d^2(Y_t, \M) - d^2(Y_t, 
	\M) \right|.
	\end{align*}
	Applying Lemma \ref{lem1} and using the inequalities
	\begin{align*}
		&
		\E \sup\limits_{\M \in \mclass_\varkappa^d} \left[ d^2(Y_t, \M) - \overline\E 
		d^2(Y_t, \M) \right]^2
		\leq \E \sup\limits_{\M \in \mclass_\varkappa^d} \left[2\|\eps_t\|^2 + 2\overline\E 
		\|\eps_t\|^2 + 4 d_H^2(\M, \M^*) \right]^2
		\\&
		\leq \E \left[2\|\eps_t\|^2 + 2\overline\E \|\eps_t\|^2 + 4 (2R)^2 \right]^2
		\leq 16 \E \|\eps_t\|^4 + 16\overline\E \|\eps_t\|^4 + 2^9 R^4
		\leq 2^{14} \sigma_t^4 D^2 + 2^9 R^4,
	\end{align*}
	where the last inequality follows from \eqref{lem2_3}, we conclude
	\begin{align*}
		&
		\frac1T \sum\limits_{t=1}^T \E \sup\limits_{\M\in\mclass_\varkappa^d} \left| 
		\overline\E d^2(Y_t, \M) - d^2(Y_t, \M) \right| - \overline\E 
		\sup\limits_{\M\in\mclass_\varkappa^d} \left| \overline\E d^2(Y_t, \M) - d^2(Y_t, 
		\M) \right|
		\\&
		\leq \sum\limits_{t=1}^T \frac{2A(128 \sigma_t^2 D + 16 R^2)}{T} (1 - 
		\rho)^{t/2}
		\leq \frac{2A(128 \sigma_{\max}^2 D + 16 R^2)}{T(1 - \sqrt{1 - \rho})}
		\leq \frac{4A(128 \sigma_{\max}^2 D + 16 R^2)}{T\rho}.
	\end{align*}
	Thus,
	\begin{align*}
		&
		\E \sup\limits_{\M\in\mclass_\varkappa^d} \left| \frac1T 
		\sum\limits_{t=1}^T \E d^2(Y_t, \M) - \frac1T \sum\limits_{t=1}^T d^2(Y_t, \M) 
		\right|
		\\&
		- \overline \E \sup\limits_{\M\in\mclass_\varkappa^d} \left| \frac1T 
		\sum\limits_{t=1}^T \overline \E d^2(Y_t, \M) - \frac1T \sum\limits_{t=1}^T 
		d^2(Y_t, \M) \right|
		\leq \frac{6A(128 \sigma_{\max}^2 D + 16 R^2)}{T\rho}.
	\end{align*}

\subsection{Proof of Lemma \ref{lem4}}
\label{lem4_proof}

	Fix any $k \in \{1, \dots, K\}$ and consider the block $B_k = \{t_1, t_2, \dots, 
	t_{N_k}\}$, where $t_1 < t_2 < \dots < t_{N_k}$.
	Let $\overline\p$ stand for the measure, corresponding to the case when $X_t$'s 
	are generated from the stationary measure $\pi$, and let $\widetilde\p$ stand for 
	the measure, corresponding to the case, when $X_t's$ are replaced by their 
	independent copies $\widetilde X_1, \dots, \widetilde X_T$. Corollary F.3.4 in 
	\cite{dmps18} and the spectral gap condition yield
	\begin{align*}
		&
		\|\overline\p - \widetilde\p\|_{TV}
		= \sup\limits_{A_1, \dots, A_{N_k}} \left| \overline\p\left( X_{t_1} \in \A_1, \dots, 
		X_{t_{N_k}} \in A_{N_k} \right) - \prod\limits_{j=1}^{N_k} \widetilde\p\left( X_{t_j} 
		\in \A_j \right) \right|
		\\&
		\leq A(1 - \rho)^K + \sup\limits_{A_1, \dots, A_{N_k}} \left| \overline\p\left( 
		X_{t_1} \in \A_1, \dots, 
		X_{t_{N_{k-1}}} \in A_{N_{k-1}} \right) \cdot \p\left( X_{t_{N_{k}}} \in A_{N_{k}} 
		\right) - \prod\limits_{j=1}^{N_k} \widetilde\p\left( X_{t_j} 
		\in \A_j \right) \right|
		\\&
		\leq A(1 - \rho)^K + \sup\limits_{A_1, \dots, A_{N_{k-1}}} \left| \overline\p\left( 
		X_{t_1} \in \A_1, \dots, 
		X_{t_{N_{k-1}}} \in A_{N_{k-1}} \right) - \prod\limits_{j=1}^{N_{k-1}} 
		\widetilde\p\left( X_{t_j} \in \A_j \right) \right|.
	\end{align*}
	Repeating the same trick $T-1$ times, we obtain
	\[
		\|\overline\p - \widetilde\p\|_{TV}
		\leq N_k A(1 - \rho)^K.
	\]
	
	Thus,
	\begin{align*}
		&
		\sum\limits_{k=1}^K \overline\E \sup\limits_{\M \in \mclass_\varkappa^d} 
		\left| \sum\limits_{t\in B_k}\left( d^2(Y_t, \M) - \overline\E d^2(Y_t, \M) \right) 
		\right|
		\\&
		\leq \sum\limits_{t=1}^T N_k A (1 - \rho)^K + \sum\limits_{k=1}^K \widetilde\E 
		\sup\limits_{\M \in 
		\mclass_\varkappa^d} \left| \sum\limits_{t\in B_k}\left( d^2(Y_t, \M) - \widetilde\E 
		d^2(Y_t, \M) \right) \right|
		\\&
		= TA (1 - \rho)^K + \sum\limits_{k=1}^K \widetilde\E \sup\limits_{\M \in 
		\mclass_\varkappa^d} \left| \sum\limits_{t\in B_k}\left( d^2(Y_t, \M) - \widetilde\E 
		d^2(Y_t, \M) \right) \right|.
	\end{align*}

\subsection{Proof of Lemma \ref{lem5}}
\label{lem5_proof}

	Let $\{\xi_t : t \in B_k\}$ be i.i.d. Rademacher random variables.
	Introduce the Rademacher complexity of the block $B_k$:
	\[
	\Rad_k(\mclass_\varkappa^d)
	= \widetilde \E \E_\xi \sup\limits_{\M \in \mclass_\varkappa^d} \left| \sum\limits_{t 
		\in B_k} \xi_t d^2(Y_t, \M) \right|.
	\]
	The standard symmetrization argument (see, for instance, \cite{gz84}) yields
	\[
	\widetilde \E \sup\limits_{\M \in \mclass_\varkappa^d} \left| \sum\limits_{t \in B_k} 
	d^2 (Y_t, \M) - \widetilde \E d^2 (Y_t, \M) \right| \leq 2 
	\Rad_k(\mclass_\varkappa^d).
	\]
	Note that, for any $\M, \M' \in \mclass_\varkappa^d$, it 
	holds
	\begin{equation}
	\label{lem3_1}
	\left|d(Y_t, \M) - d(Y_t, \M')\right|  \leq d_H(\M, \M').
	\end{equation}
	Indeed,
	\[
	d(Y_t, \M) = \min\limits_{x \in \M} \|Y_t - x\|
	\leq \|Y_t - \proj{\M'}{Y_t}\| + \min\limits_{x \in \M} \|\proj{\M'}{Y_t} - x\|
	\leq d(Y_t, \M') + d_H(\M, \M').
	\]
	Similarly, one can prove the inequality $d(Y_t, \M') \leq d(Y_t, \M') + d_H(\M, \M')$, 
	and therefore, \eqref{lem3_1} holds.
	Then
	\begin{align*}
	\left|d^2(Y_t, \M) - d^2(Y_t, \M')\right|
	&
	= \left|d(Y_t, \M) - d(Y_t, \M')\right| \left( d(Y_t, \M) + d(Y_t, \M')\right)
	\\&
	\leq 2 d_H(\M, \M') \left( \|\eps_t\| + 2R \right),
	\end{align*}
	where the last inequality holds due to the fact that
	\[
	d(Y_t, \M)
	= \min\limits_{x\in\M} \|X_t + \eps_t - x\|
	\leq \min\limits_{x\in\M} \|X_t - x\| + \|\eps_t\|
	\leq d_H(\M, \M^*) + \|\eps_t\|
	\leq 2R + \|\eps_t\|.
	\]
	Also, note that
	\begin{align*}
	\sqrt{\sum\limits_{t\in B_k} \left( d^2(Y_t, \M) - d^2(Y_t, \M') \right)^2}
	&
	\leq \sqrt{2 d_H^2(\M, \M') \sum\limits_{t\in B_k} \left( \|\eps_t\| + 2R \right)^2}
	\\&
	\leq d_H(\M, \M')  \sqrt{2 \sum\limits_{t\in B_k} \left( \|\eps_t\| + 2R 
		\right)^2}.
	\end{align*}
	Applying the chaining technique (see \cite{sst10}, Lemma A.3), we obtain the 
	following form 
	of the Dudley's integral:
	\begin{equation}
		\label{lem3_2}
		\Rad_k(\mclass_\varkappa^d)
		\lesssim \widetilde\E \left( \gamma \sum\limits_{t\in B_k} (2R + \|\eps_t\|) + 
		\sqrt{2 \sum\limits_{t\in B_k} \left( \|\eps_t\| + 2R \right)^2} 
		\int\limits_\gamma^{2R} 
		\sqrt{\log \N(\mclass_\varkappa^d, d_H, u)} \dd u \right), \quad \forall \gamma \in 
		[0, 2R],
	\end{equation}
	where $\N(\mclass_\varkappa^d, d_H, u)$ is the $u$-covering number of 
	$\mclass_\varkappa^d$ with respect to the Hausdorff distance $d_H$.
	Theorem 9 in \cite{gppvw12a} claims that
	\[
	\N(\mclass_\varkappa^d, d_H, u)
	\leq c_1 \binom{D}{d}^{(c_2/\varkappa)^D} \exp\left\{ 2^{d/2}(D - d) (c_2 / 
	\varkappa)^D u^{-d/2} \right\},
	\]
	where the constant $c_2$ depends only on $\varkappa$ and $d$.
	Using the inequality $\binom Dd \leq (eD/d)^d$, we obtain
	\[
		\log \N(\mclass_\varkappa^d, d_H, u) \leq \log c_1 + d (c_2/\varkappa)^D 
		\log\frac{eD}d + 2^{d/2}(D - d) (c_2 / \varkappa)^D u^{-d/2}.
	\]
	In \eqref{lem3_2}, choose
	\[
		\gamma =
		\begin{cases}
		0, \quad d < 4,\\
		\psi_k, \quad d = 4,\\
		\psi_k^{-4/d}, \quad d > 4.
		\end{cases},
	\]
	where 
	\[
		\psi_k = \frac{R\sqrt{D N_k} + D \sqrt{\sum\limits_{t \in B_k} \sigma_t^2}}{R N_k + 
		\sqrt{D} \sum\limits_{t \in B_k} \sigma_t}.
	\]
	Then
	\begin{equation*}
		\Rad_k(\mclass_\varkappa^d)
		\lesssim
		\begin{cases}
			D\sqrt{\sum\limits_{t\in B_k} \sigma_t^2} + R\sqrt{D N_k}, \quad d < 4,\\
			\left(D\sqrt{\sum\limits_{t\in B_k} \sigma_t^2} + R\sqrt{D N_k}\right) \log 
			\psi_k, \quad d = 4,\\
			\left(D\sqrt{\sum\limits_{t\in B_k} \sigma_t^2} + R\sqrt{D N_k}\right) 
			\psi_k^{1-4/d}, \quad d > 4.
		\end{cases}
	\end{equation*}
	\section{Proofs related to Theorem \ref{th2}}
\label{th2_proofs}

\subsection{Proof of Lemma \ref{lem8}}

Assume that there exists $x_0 \in \M^*$ such that $d(x_0, \M^*) > c\varkappa$.
Then, for any $x \in \B(x_0, c\varkappa/2)$, it holds that $d(x, \M) > c\varkappa/2$.
Due to Lemma \ref{lem7}, with probability at least $1 - \frac{4^d V}{(c\varkappa)^d} 
e^{-\frac{p_0 (c\varkappa/4)^d}{12} \lfloor T/k\rfloor}$,
the ball $\B(x_0, c\varkappa/2)$ contains at least $0.5 p_0(c\varkappa/4)^d \lfloor T/k 
\rfloor$ points.
On this event, for any $\M \in \mclass_\varkappa^d$, we have
\begin{align*}
&
	\frac 1T \sum\limits_{t=1}^T d^2(Y_t, \M)
	= \frac 1T \sum\limits_{t=1}^T \min\limits_{x \in \M} \|Y_t - x\|^2
	\geq \frac 1T \sum\limits_{t=1}^T \left( \min\limits_{x \in \M} \frac 12 \|X_t - x\|^2 - 
	\|\eps_t\|^2 \right)
	\\&
	= \frac 1{2T} \sum\limits_{t=1}^T d^2(X_t, \M) - \frac1T \sum\limits_{t=1}^T 
	\|\eps_t\|^2
	> \frac{p_0}{2k} \left( \frac{c\varkappa}4 \right)^{d+2} - \frac1T \sum\limits_{t=1}^T 
	\|\eps_t\|^2.
\end{align*}
On the other hand,
\[
\frac1T \sum\limits_{t=1}^T d^2(Y_t, \M^*)
\leq \frac1T \sum\limits_{t=1}^T \|\eps_t\|^2.
\]
Lemma \ref{lem_conc} implies
\[
	\frac1T \sum\limits_{t=1}^T \|\eps_t\|^2
	\leq \frac8T \sum\limits_{t=1}^T \sigma_t^2 + 64D\sigma_{\max}^2 + \frac{64}T 
	\sqrt{\sum\limits_{t=1}^T \sigma_t^4}\left( \log\frac1\delta 
	\vee \sqrt{\log\frac1\delta} \right).
\]
Choose $\delta = 1/T$.
If $T$ is large enough then, with probability at least $1 - \frac{4^d V}{(c\varkappa)^d} 
e^{-\frac{p_0 (c\varkappa/4)^d}{12} \lfloor T/k\rfloor} - 1/T$, we have
\[
	\frac1T \sum\limits_{t=1}^T 
	\|\eps_t\|^2 < \frac{p_0}{4k} \left( \frac{c\varkappa}4 \right)^{d+2},
\]
which yields
\[
	\frac 1T \sum\limits_{t=1}^T d^2(Y_t, \M) > \frac 1T \sum\limits_{t=1}^T d^2(Y_t, \M^*)
\]
simultaneously for all $\M \in \mclass_\varkappa^d$ such that $\max\limits_{x\in\M^*} 
d(x, \M) \geq c\varkappa$.

\subsection{Proof of Lemma \ref{lem7}}

Let $\N(\M^*, h)$ be the minimal $h$-net of $\M^*$ with respect to the Euclidean 
distance.
Then Lemma \ref{lem6} yields
\begin{align*}
	&
	\p\left( \exists \, x \in \M^* : \sum\limits_{t=1}^T \1\left( X_t \in \B(x, 2h) \right) < 
	\frac{p_0 h^d}2 \left\lfloor\frac Tk \right\rfloor \right)
	\\&
	\leq \p\left( \exists \, x \in \N(\M^*, h) : \sum\limits_{t=1}^T \1\left( X_t \in \B(x, h) 
	\right) 
	< \frac{p_0 h^d}2 \left\lfloor\frac Tk \right\rfloor \right)
	\leq |\N(\M^*, h)| e^{-\frac{p_0 h^d}{12} \lfloor T/k\rfloor}.
\end{align*}
Similarly,
\begin{align*}
	&
	\p\left( \exists \, x \in \M^* : \sum\limits_{t=1}^T \1\left( X_t \in \B(x, h/2) \right) >
	\frac{3p_1 h^d T}2\right)
	\\&
	\leq \p\left( \exists \, x \in \N(\M^*, h) : \sum\limits_{t=1}^T \1\left( X_t \in \B(x, h) 
	\right) 
	< \frac{3p_1 h^d T}2 \right)
	\leq |\N(\M^*, h)| e^{-\frac{p_1 h^d}{12} \lceil T/k\rceil}.
\end{align*}
To finish the proof, note that Lemma 2.5 in \cite{blw18} implies that, for any $h \leq 2 
\varkappa$ a Euclidean ball $\B(x, h)$, $x\in \M^*$, contains a ball of radius $h/2$ with 
respect to the geodesic distance on $\M^*$.
Since the volume of $\M^*$ is at most $V$, it can be covered with $(2^d V)/h^d$ 
Euclidean balls of radius $h$.

\subsection{Proof of Lemma \ref{lem_conc}}

{\bf Proof of (1).}\\
Fix any $t \in \{1, \dots, T\}$.
Theorem 1.19 in \cite{r15} implies that, with probability at least $1 - \delta/T$, we 
have
\[
	\|\eps_t^\parallel\| \leq 4\sigma_t \sqrt{d} + 2\sigma_t \sqrt{2 \log(T/\delta)}.
\]
The union bound yields the assertion of the lemma.

\medskip
\noindent
{\bf Proof of (2).}\\

Using the $\eps$-net argument (see \cite{r15}, Theorem 1.19), we obtain
\[
	\sum\limits_{t=1}^T \|\eps_t^\perp\|
	= \max\limits_{u_1, \dots, u_T \in \B(0, 1)} \sum\limits_{t=1}^T u_t^T\eps_t^\perp
	\leq 2 \max\limits_{u_1 \in \N_{1/2}^1, \dots, u_T \in \N_{1/2}^T} \sum\limits_{t=1}^T 
	u_t^T\eps_t^\perp,
\]
where $\N_{1/2}^t$, $1 \leq t \leq T$, is the minimal $1/2$-net of $\B(0, 1) \cap 
\T_{X_t}^\perp\M^*$.
It is known that $|\N_{1/2}^t| \leq 6^{D-d}$.
For any $t$ and any $u_t \in \N_{1/2}$, $u_t^T\eps_t^\perp$ is a sub-Gaussian random 
variable with parameter $\sigma_t^2$.
The Hoeffding inequality and the union bound yield that, for any $u > 0$,
\[
	\p\left( \sum\limits_{t=1}^T \|\eps_t^\perp\| \geq u \right) \leq 6^{(D-d)T} \exp\left\{ 
	-\frac{u^2}{8 \sum\limits_{t=1}^T \sigma_t^2} \right\},
\]
and the claim of the lemma follows.

\medskip
\noindent
{\bf Proof of (3).}\\

The $\eps$-net argument (see \cite{r15}, Theorem 1.19) yields
\[
	\|\eps_t^\parallel\|^2 = \max\limits_{u \in \B(0,1)} (u^T\eps_t^\parallel)^2
	\leq 4 \max\limits_{u \in \N_{1/2}^t} (u^T\eps_t^\parallel)^2,
\]
where $\N_{1/2}^t$, $1 \leq t \leq T$, is the minimal $1/2$-net of $\B(0, 1) \cap 
\T_{X_t}^\perp\M^*$.
Then
\begin{align*}
	\frac1T \sum\limits_{t=1}^T \|\eps_t^\parallel\|^2
	= \frac4T \max\limits_{u_1 \in \N_{1/2}^1, \dots, u_T \in \N_{1/2}^T} \sum\limits_{t=1}^T 
	(u_t^T\eps_t^\parallel)^2.
\end{align*}
Lemma 1.12 in \cite{r15} implies that $(u_t^T\eps_t^\parallel)^2$ is sub-Exponential 
random variable $\text{SE}(16\sigma_t^2, 16\sigma_t^2)$ (see \cite{w19}, Definition 2.7 
for definition of sub-exponential random variable).
Then, for any $u_1 \in \N_{1/2}^1, \dots, u_T \in \N_{1/2}^T$, $\sum\limits_{t=1}^T 
(u_t^T\eps_t^\parallel)^2$ is sub-Exponential random variable 
$\text{SE}\left(16\sqrt{\sum\limits_{t=1}^T \sigma_t^4}, 16\sigma_{\max}^2 \right)$.
Using a concentration inequality for sub-exponential random variables (see \cite{w19}, 
Proposition 2.9), we obtain that
\[
	\frac4T \sum\limits_{t=1}^T (u_t^T\eps_t^\parallel)^2
	\leq \frac4T \sum\limits_{t=1}^T \sigma_t^2 + \frac{32}T \left( \sigma_{\max}^2 
	\log\frac1\delta \vee \sqrt{\sum\limits_{t=1}^T \sigma_t^4 \log\frac1\delta} \right)
\]
with probability at least $1 - \delta$.
The union bound implies that, with probability at least $1 - \delta$, it holds that
\begin{align*}
	&
	\frac4T \max\limits_{u_1 \in \N_{1/2}^1, \dots, u_T \in \N_{1/2}^T} \sum\limits_{t=1}^T 
	(u_t^T\eps_t^\parallel)^2
	\\&
	\leq \frac4T \sum\limits_{t=1}^T \sigma_t^2 + \frac{32}T \left( \sigma_{\max}^2 
	\left( dT\log 6 + \log\frac1\delta\right) \vee \sqrt{\left(dT\log 
	6 + \log\frac1\delta \right) \sum\limits_{t=1}^T \sigma_t^4 } \right)
	\\&
	\leq \frac4T \sum\limits_{t=1}^T \sigma_t^2 + 64d \sigma_{\max}^2 + \frac{32}T 
	\sqrt{\sum\limits_{t=1}^T \sigma_t^4}\left( \log\frac1\delta 
	\vee \sqrt{\log\frac1\delta} \right).
\end{align*}
\medskip
\noindent
{\bf Proof of (4).}\\
The proof of (4) is similar to the proof of (3).

\medskip
\noindent
{\bf Proof of (5).}\\

Using the large deviation inequality for the norm of sub-Gaussian random vector (see 
\cite{r15}, the proof of Theorem 1.19), we 
obtain $\p\left(\|\eps_t^\parallel\| \geq c\varkappa\right) \leq 6^d 
e^{-\frac{c^2\varkappa^2}{4\sigma_t^2}}$.
The Bernstein's inequality for Bernoulli random variables yields that, for any $\delta \in 
(0, 1)$, with probability at least $1 - \delta$, it holds that
\begin{align*}
	\frac1T \sum\limits_{t=1}^T \1\left( \|\eps_t^\parallel\| \geq c\varkappa \right)
	&
	\leq \frac{6^d}T \sum\limits_{t=1}^T e^{-\frac{c^2\varkappa^2}{4\sigma_t^2}} + 
	\sqrt{\frac{2 \cdot 6^d\log(1/\delta)}T \sum\limits_{t=1}^T 
	e^{-\frac{c^2\varkappa^2}{4\sigma_t^2}}} + \frac{2\log(1/\delta)}{3T}
	\\&
	< \frac{2 \cdot 6^d}T \sum\limits_{t=1}^T e^{-\frac{c^2\varkappa^2}{4\sigma_t^2}} + 
	\frac{2\log(1/\delta)}{T}.
\end{align*}

	\section{Plots of the predictions}
\label{plots}

\begin{figure}[h!]%
	\centering
	\subfloat[][\centering One day 
	ahead]{{\includegraphics[scale=0.35]{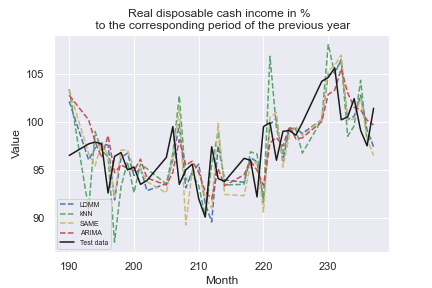}
	}}%
	\qquad
	\subfloat[][\centering Two days 
	ahead]{{\includegraphics[scale=0.35]{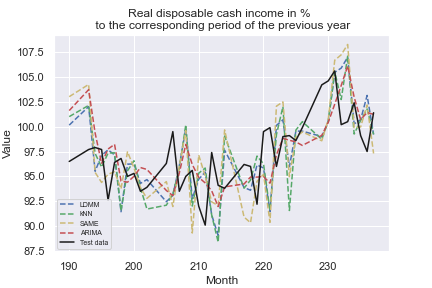}
	}}%
	\qquad
	\subfloat[][\centering Three days 
	ahead]{{\includegraphics[scale=0.35]{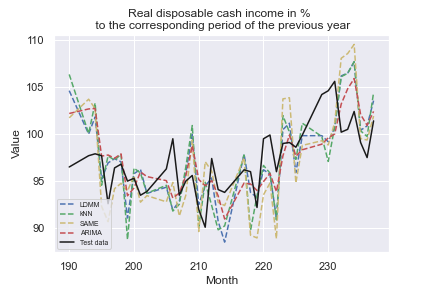}
	}}%
	\qquad
	\subfloat[][\centering Four days 
	ahead]{{\includegraphics[scale=0.35]{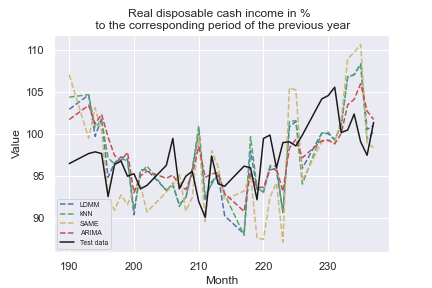}
	}}%
	\caption{Prediction of the first component of multidimensional time series}
	\label{fig_1component}
\end{figure}

\begin{figure}[h!]%
	\centering
	\subfloat[][\centering One day 
	ahead]{{\includegraphics[scale=0.35]{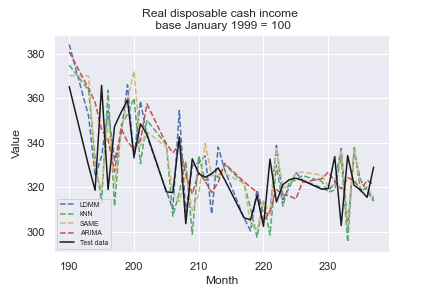}
	}}%
	\qquad
	\subfloat[][\centering Two days 
	ahead]{{\includegraphics[scale=0.35]{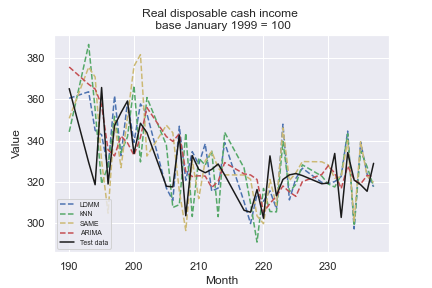}
	}}%
	\qquad
	\subfloat[][\centering Three days 
	ahead]{{\includegraphics[scale=0.35]{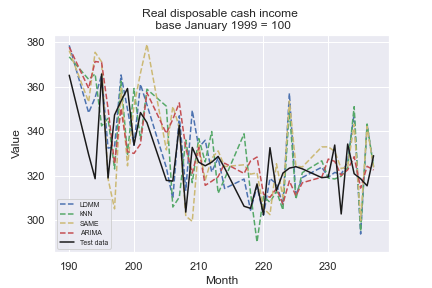}
	}}%
	\qquad
	\subfloat[][\centering Four days 
	ahead]{{\includegraphics[scale=0.35]{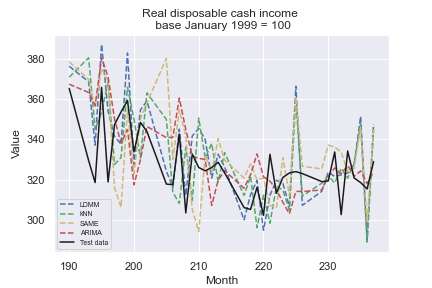}
	}}%
	\caption{Prediction of the second component of multidimensional time series}
\end{figure}

\begin{figure}[h!]%
	\centering
	\subfloat[][\centering One day 
	ahead]{{\includegraphics[scale=0.35]{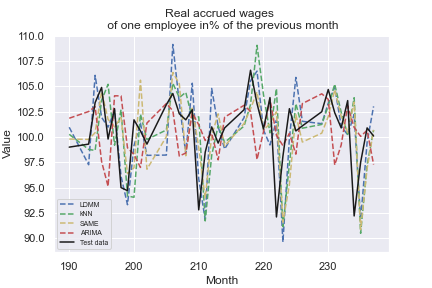}
	}}%
	\qquad
	\subfloat[][\centering Two days 
	ahead]{{\includegraphics[scale=0.35]{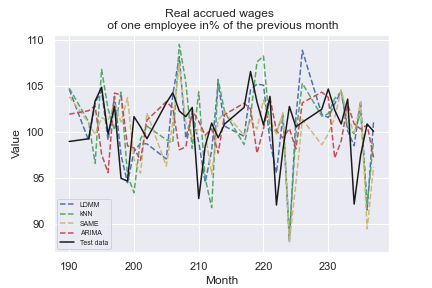}
	}}%
	\qquad
	\subfloat[][\centering Three days 
	ahead]{{\includegraphics[scale=0.35]{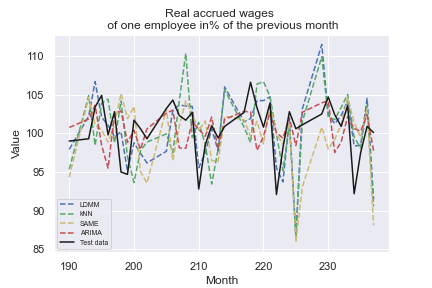}
	}}%
	\qquad
	\subfloat[][\centering Four days 
	ahead]{{\includegraphics[scale=0.35]{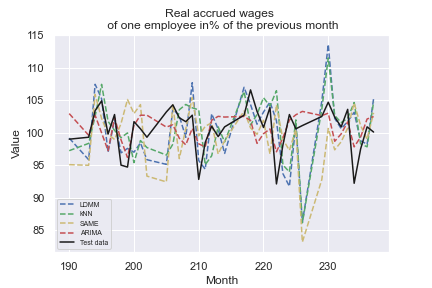}
	}}%
	\caption{Prediction of the third component of multidimensional time series}
\end{figure}

\begin{figure}[h!]%
	\centering
	\subfloat[][\centering One day 
	ahead]{{\includegraphics[scale=0.35]{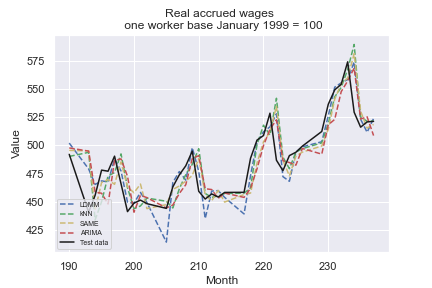}
	}}%
	\qquad
	\subfloat[][\centering Two days 
	ahead]{{\includegraphics[scale=0.35]{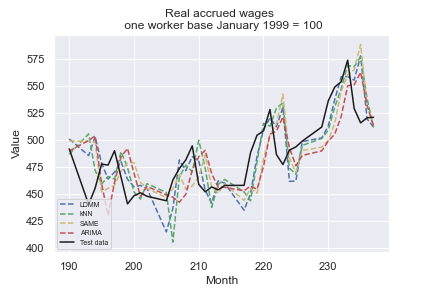}
	}}%
	\qquad
	\subfloat[][\centering Three days 
	ahead]{{\includegraphics[scale=0.35]{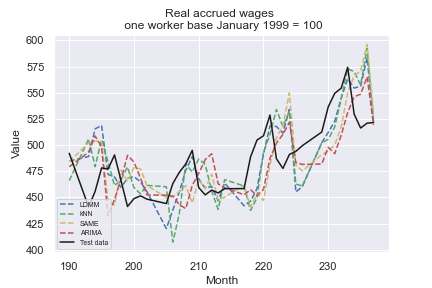}
	}}%
	\qquad
	\subfloat[][\centering Four days 
	ahead]{{\includegraphics[scale=0.35]{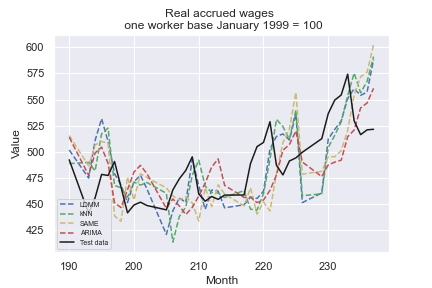}
	}}%
	\caption{Prediction of the fourth component of multidimensional time series}
	\label{fig_4component}
\end{figure}

\end{document}